\numberwithin{equation}{section}
\newtheorem{theorem}{\textbf{Theorem}}
\newtheorem{definition}{\textbf{Definition}}
\newtheorem{lemma}{\textbf{Lemma}}
\newtheorem{remark}{\textbf{Remark}}
\newtheorem{example}{\textbf{Example}}
\title{Gauss Newton method for solving variational problems of PDEs with neural network discretizaitons}
\author{Wenrui  Hao\thanks{Department of Mathematics, Pennsylvania State University, USA   (email{wxh64@psu.edu}).}
\and Qingguo Hong\thanks{Department of Mathematics and Statistics, Missouri University of Science and Technology, USA.}
\and Xianlin Jin\thanks{School of Mathematical Sciences, Peking University, Beijing, China.}
}
\begin{document}

\maketitle
\begin{abstract}
The numerical solution of differential equations using machine learning-based approaches has gained significant popularity. Neural network-based discretization has emerged as a powerful tool for solving differential equations by parameterizing a set of functions. Various approaches, such as the deep Ritz method and physics-informed neural networks, have been developed for numerical solutions. Training algorithms, including gradient descent and greedy algorithms, have been proposed to solve the resulting optimization problems.

In this paper, we focus on the variational formulation of the problem and propose a Gauss-Newton method for computing the numerical solution. We provide a comprehensive analysis of the superlinear convergence properties of this method, along with a discussion on semi-regular zeros of the vanishing gradient. Numerical examples are presented to demonstrate the efficiency of the proposed Gauss-Newton method.

\end{abstract}


{\bf Keywords:} Partial differential equations, neural network discretization, variational form, Gauss-Newton method, convergence analysis.

\section{Introduction}
The use of machine learning-based approaches in the computational mathematics community has witnessed significant growth in recent years, particularly in the numerical solution of differential equations. Neural network-based discretization has emerged as a revolutionary tool for solving differential equations \cite{dissanayake1994neural, han2018solving,sirignano2018dgm} and for discovering the underlying physics from experimental data \cite{raissi2019physics}. This approach has been successfully applied to a wide range of practical problems with remarkable success \cite{cai2022physics,gu2021selectnet,huang2022hompinns,pang2019fpinns}. One of the key advantages of this approach is that neural networks can alleviate, and in some cases overcome, the curse of dimensionality associated with high-dimensional problems \cite{khoo2021solving, han2018solving,lu2022priori}. This can be attributed to the dimension-independent approximation properties of neural networks \cite{klusowski2018approximation}, which have been compared to traditional methods such as finite element methods (FEMs) and other approximation techniques in the field of approximation theory \cite{ lu2021deep, shen2022optimal,siegel2022high,CiCP-28-1707}.

{Discretizing PDEs through neural networks involves parameterizing a set of functions aimed at solving these PDEs. To illustrate this concept, we consider the following Laplace's equation:

\[
\begin{cases}
-\Delta v = f(v) & \text{in } \Omega \\
\frac{\partial v}{\partial n} = 0 & \text{on } \partial \Omega
\end{cases}
\]

Here, $\Omega \subset \mathbb{R}^d$ and $\partial \Omega$ is the boundary of the domain. The associated energy functional is expressed as:

\[
\min_{w} J(w) = \int_{\Omega} |\nabla w|^2 - G(w), \, dx
\]
where $G'(w)=f(w)$.
Three primary approaches exist for solving numerical solutions based on the neural network discretization:

\begin{itemize}
\item \textbf{Variational Energy Minimization (Deep Ritz Method):}
   The first approach, known as the deep Ritz method \cite{yu2018deep}, involves minimizing the variational energy:

   \[
   \min_{\boldsymbol{\theta}} L(\boldsymbol{\theta}) = \int_{\Omega} |\nabla u(x ; \boldsymbol{\theta})|^2 - G(u(x ; \boldsymbol{\theta})) \, dx
   \]

\item \textbf{$L_2$ Residual Minimization:}
   The second approach, widely applied in Physics-Informed Neural Networks (PINNs) \cite{raissi2019physics}, minimizes the $L_2$ residual of PDEs and boundary conditions:

   \[
   \min_{\boldsymbol{\theta}} \sum_i \left\| \Delta u(x_i ; \boldsymbol{\theta}) + f(u(x_i ; \boldsymbol{\theta})) \right\|^2 + \sum_j \left\| \frac{\partial u(x_j ; \boldsymbol{\theta})}{\partial n} \right\|^2
   \]

\item \textbf{System of Nonlinear Equations:}
   The third approach involves solving a discretized system of nonlinear equations \cite{chen2022randomized}:

   \[
   \boldsymbol{F}(\boldsymbol{\theta}) = \begin{cases}
   \Delta u(x_i ; \boldsymbol{\theta}) + f(u(x_i ; \boldsymbol{\theta})), & i = 1, \cdots, N \\
   \frac{\partial u(x_j ; \boldsymbol{\theta})}{\partial \mathrm{n}}, & j = 1, \cdots, n
   \end{cases} = \mathbf{0}
   \]

In this context, the second approach is related to the third approach and can be expressed as $\min_{\boldsymbol{\theta}} \|\boldsymbol{F}(\boldsymbol{\theta})\|_2^2$.
\end{itemize}
}

Researchers have made efforts to bound the errors associated with these approaches. For instance, studies have shown that gradient descent applied to a wide enough network can reach a global minimum \cite{allen2019convergence, arora2019fine,zou2020gradient}. The convergence of stochastic gradient descent (SGD) and Adam optimizer \cite{kingma2014adam} has also been analyzed in Fourier space, revealing that the error converges more rapidly in the lowest frequency modes. This observation is known as the frequency principle or spectral bias of neural network training \cite{rahaman2019spectral}. Furthermore, a novel greedy training algorithm has been devised for shallow neural networks in order to numerically determine their theoretical convergence rate \cite{siegel2023greedy}.

In practice, Adam or SGD are commonly used to solve the resulting optimization problems in both variational and L2-minimization approaches. Additionally, randomized Newton's method has been developed to achieve faster local convergence by solving the system of nonlinear equations \cite{chen2022randomized}. Gauss-Newton method has also been employed to speed up the computation of the L2-minimization approach \cite{cai2019gram,muller2023achieving}. However, these training algorithms cannot be directly applied to the variational form of the problem.
In this paper, we will develop a Gauss-Newton method for the variational form and provide a convergence analysis for this proposed method.

The remaining sections of the paper are organized as follows:
In Section \ref{basic-section}, we introduce the problem setup and discuss the class of elliptic PDEs we will be solving.
Section \ref{GNM} provides an overview of Gauss-Newton methods for solving PDEs in both variational and L2-minimization forms.
The property of local minimizes, namely, semi-regular zeros of the vanishing gradient, is discussed in Section \ref{sec:semiregular}.
In Section \ref{sec:con}, we present a comprehensive analysis of the convergence properties of the Gauss-Newton method for the variational form, as well as its variant, the random Gauss-Newton method. 
Several numerical examples are presented in Section \ref{sec:num} to illustrate the efficiency of the proposed Gauss-Newton method.
Finally, in Section \ref{con}, we conclude the paper, by summarizing the key findings and discussing potential avenues for future research.

\section{Problem setup}\label{basic-section}
We consider the following second-order elliptic equation 
\begin{align}\label{EP1}
\mathcal{L}v = f(v) \text{ in } \Omega,
\end{align}
where $\Omega \in \mathbb{R}^{d}$ is an open subset, $d \geq 1$. Here $\mathcal{L}$ is the second-order elliptic operator defined by 
\begin{align}\label{OperatorL}
\mathcal{L} = -\sum_{i=1}^{d}\sum_{j=1}^{d} a_{i,j} \dfrac{\partial }{\partial x_i}\dfrac{\partial }{\partial x_j} + \sum_{k=1}^{d} b_{k} \dfrac{\partial }{\partial x_k} + c .
\end{align} 
To ensure the existence and uniqueness of the weak solution for problem (\ref{EP1}), certain assumptions need to be made on the operator $\mathcal{L}$. Specifically, the following conditions should be satisfied \cite{evans2022partial}: 
\begin{align}\label{Assumption1}
& a_{i,j} \in L^{\infty}(\Omega), \quad 1\leq i,j\leq d,
\end{align}

which means that the coefficients of the operator are bounded in $L^\infty(\Omega)$.

Moreover, there should exist positive constants $\lambda$ and $\Lambda$ such that
\begin{align}\label{Assumption2}
& \lambda \vert \boldsymbol{\xi} \vert^2 \leq \sum_{i=1}^{d}\sum_{j=1}^{d} a_{i,j} \xi_i\xi_j \leq \Lambda \vert \boldsymbol{\xi} \vert^2, \quad \forall \boldsymbol{\xi}\in \mathbb{R}^{d},\quad \forall x\in \Omega.
\end{align}

This means that the operator is uniformly elliptic, i.e., it satisfies a strong ellipticity condition, with ellipticity constants $\lambda$ and $\Lambda$. These assumptions ensure that the weak solution to the problem (\ref{EP1}) exists and is unique.
To simplify notation, we consider the following second-order partial differential equation: 
\begin{align} \label{2ndPDE}
-a\Delta v(x) +cv(x) &= f(x), \text{ in } \Omega\\
\dfrac{\partial v}{\partial n } &=0, \text{ on } \partial \Omega
\end{align}
{ where $a> 0$ and $c\geq 0$ are} constants such that the conditions (\ref{Assumption1})-(\ref{Assumption2}) are satisfied. It should be noted that all the results presented in this paper can be extended to the more general form of the operator $L$ defined in (\ref{OperatorL}), with Dirichlet boundary conditions.

We define the admissible set $V$ on $\Omega$ as
$V = H^1(\Omega)$
 and transform the PDE (\ref{2ndPDE}) into an energy minimization problem, given by:
{\begin{align}\label{minEnergy}
\min_{w\in V} \mathcal{J}(w) = \int_{\Omega}\left( \dfrac{a}{2} \vert \nabla w(x) \vert^2 + \dfrac{c}{2} w(x)^2 - f(x)w(x \right) dx.
\end{align}
Here, $\mathcal{J}(w)$ is the energy functional, and the minimizer $v = \displaystyle\arg\min_{w\in V} \mathcal{J}(w) $ of (\ref{minEnergy}) satisfies the PDE (\ref{2ndPDE}). { It is important to note that minimizing (\ref{minEnergy}) is equivalent to solving the PDE (\ref{2ndPDE}).}

The minimization problem can be solved using various learning algorithms, such as the Deep Ritz method \cite{yu2018deep}, which utilizes deep neural networks (DNNs). In practice, the energy integral in (\ref{minEnergy}) can be computed using numerical quadrature methods, such as the Gauss quadrature and the Monte Carlo method \cite{hammersley2013monte}.

\section{Gauss-Newton method for the variational problem}\label{GNM}
{ In this section, without loss of generality, we will introduce the Gauss-Newton method for solving the energy minimization problem (\ref{minEnergy}) with $a=1$ and $c=1$}. Let $u(x,\theta)$ be a learning model to approximate the exact solution $w(x)$, for example, $u(x,\theta)$ can be the finite element function or the neural network function. 
{
More specifically, we consider a neural network discretization, consisting of a sequence of fully connected layers from the input \(x \in \mathbb{R}^d\) to the output \(u \in \mathbb{R}\). Mathematically, a neural network with \(J\) hidden layers can be expressed as follows:
\begin{equation}
    u(x; \theta) = W_Jh_{J-1} + b_J, \quad h_i = \sigma(W_i h_{i-1} + b_i), \quad i \in \{1, \ldots, J-1\}, \quad \text{and } h_0 = x,
\end{equation}
where \(W_i \in \mathbb{R}^{d_{i}\times d_{i-1}}\) is the weight, \(b_i \in \mathbb{R}^{d_{i}}\) is the bias, \(d_i\) is the width of the \(i\)-th hidden layer, and \(\sigma\) is the activation function (e.g., ReLU or the sigmoid activation functions). For simplicity, we denote $m=d_1+d_2+\cdots+d_{J-1}$ and all the parameters of the neural network, including weights and biases, as \(\theta\). Then we define deep finite neuron functions with the activation function \(\sigma = \text{ReLU}^k\) as \(DNN_J\) with \(J\) layers.
}
In this paper, we will focus on a smaller admissible set, i.e., the DNN function space, where the loss function takes the form:
\begin{align}\label{lossfuncL}
L(\theta) = \int_{\Omega} \dfrac{1}{2} \vert\nabla u(x,\theta)\vert^2 + \dfrac{1}{2} u(x,\theta)^2 - f(x)u(x,\theta) dx.
\end{align} The gradient of $L(\theta)$ is 
\begin{align}\label{gradL}
\nabla_{\theta}L(\theta) = \int_{\Omega} \nabla_{\theta} \nabla u(x,\theta) \nabla u(x,\theta) + u(x,\theta) \nabla_{\theta}u(x,\theta) - f(x)\nabla_{\theta}u(x,\theta) dx
\end{align}
The Hessian of $L(\theta)$ is as follows:
\begin{align}\label{HessianL}
\boldsymbol{\operatorname{H}}(\theta) &= \underbrace{\int_{\Omega} \nabla_{\theta} \nabla u(x,\theta) \cdot
\nabla_{\theta}  \nabla u(x,\theta)^{T}
+ \nabla_{\theta}u(x,\theta) \cdot \nabla_{\theta}u(x,\theta)^{T} dx}_{\boldsymbol{J}(\theta)} \\
&+ \underbrace{\int_{\Omega} \nabla^2_{\theta} \nabla u(x,\theta) \cdot \nabla u(x,\theta) + u(x,\theta) \nabla^2_{\theta}u(x,\theta) - f(x)\nabla^2_{\theta}u(x,\theta) dx
}_{\boldsymbol{Q}(\theta)}, 
\end{align}
{where $\boldsymbol{J}(\theta)$ and $\boldsymbol{Q}(\theta)$ denote terms involving the first- and second-order derivative information of $\theta$, respectively. }



Then we estimate the second derivative matrix by the following lemma \cite{barron1993universal,siegel2020approximation,siegel2022sharp,CiCP-28-1707}.

\begin{lemma}\label{lemma4}
For $\forall \epsilon > 0$, there exist $\delta > 0$, $J\in \mathbb{N}^{+}$ and $m\in \mathbb{N}^{+}$, 
such that $ \theta\in \mathbb{R}^m$, $DNN_J\subset H^1(\Omega)$, and for $
\Vert \theta-\theta^* \Vert < \delta$, $ \theta^*=\arg\min L(\theta)$, if $ D^2_{\theta} u(x,\theta) \in H^1(\Omega)$, then it holds 
$\Vert \boldsymbol{Q}(\theta)\Vert < \epsilon.$
\end{lemma}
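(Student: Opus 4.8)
The plan is to recognize $\boldsymbol{Q}(\theta)$ as a quantity proportional to the discretization error $u(\cdot,\theta)-v$, where $v\in H^1(\Omega)$ is the exact weak solution of (\ref{2ndPDE}), and then to drive that error to zero by combining the energy-minimizing property of $\theta^*$ with the approximation power of $DNN_J$. Introduce the bilinear form $a(w,z)=\int_\Omega\nabla w\cdot\nabla z+wz\,dx$ associated with (\ref{2ndPDE}) for $a=c=1$, and note that $a(w,w)=\|w\|_{H^1}^2$ and, by Cauchy-Schwarz, $|a(w,z)|\le\|w\|_{H^1}\|z\|_{H^1}$. Writing $\phi_{ij}=\partial^2 u/\partial\theta_i\partial\theta_j$ and using that parameter and spatial derivatives commute, so $\partial^2_{\theta_i\theta_j}\nabla u=\nabla\phi_{ij}$, the definition of $\boldsymbol{Q}$ in (\ref{HessianL}) reads
\[
\boldsymbol{Q}_{ij}(\theta)=\int_\Omega\nabla\phi_{ij}\cdot\nabla u+u\phi_{ij}-f\phi_{ij}\,dx=a\big(u(\cdot,\theta),\phi_{ij}\big)-\int_\Omega f\phi_{ij}\,dx.
\]
Since $\phi_{ij}\in H^1(\Omega)$ by the hypothesis $D^2_\theta u\in H^1(\Omega)$, and since $v$ satisfies $a(v,z)=\int_\Omega fz\,dx$ for all $z\in H^1(\Omega)$, I would replace $\int_\Omega f\phi_{ij}\,dx$ by $a(v,\phi_{ij})$ to obtain the key identity $\boldsymbol{Q}_{ij}(\theta)=a\big(u(\cdot,\theta)-v,\phi_{ij}\big)$, which manifestly vanishes as $u\to v$.

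To pass from entries to the matrix norm uniformly in the parameter dimension $m$ (which grows with $J$), I would use the quadratic form of the symmetric matrix $\boldsymbol{Q}$: for any $\boldsymbol{\xi}\in\mathbb{R}^m$,
\[
\boldsymbol{\xi}^{\top}\boldsymbol{Q}(\theta)\boldsymbol{\xi}=a\Big(u(\cdot,\theta)-v,\ \sum_{i,j}\xi_i\xi_j\phi_{ij}\Big)=a\big(u(\cdot,\theta)-v,\ D^2_\theta u[\boldsymbol{\xi},\boldsymbol{\xi}]\big).
\]
Cauchy-Schwarz then gives $\|\boldsymbol{Q}(\theta)\|\le M\,\|u(\cdot,\theta)-v\|_{H^1}$ with $M=\sup_{\|\boldsymbol{\xi}\|=1}\|D^2_\theta u[\boldsymbol{\xi},\boldsymbol{\xi}]\|_{H^1}$, a constant that is finite because $D^2_\theta u\in H^1(\Omega)$ and that stays bounded for $\theta$ in a fixed closed ball about $\theta^*$.

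It remains to control $\|u(\cdot,\theta)-v\|_{H^1}$. I would split it as $\|u(\cdot,\theta)-u(\cdot,\theta^*)\|_{H^1}+\|u(\cdot,\theta^*)-v\|_{H^1}$, bounding the first term by $L_u\delta$ via local Lipschitz continuity of $\theta\mapsto u(\cdot,\theta)$ into $H^1(\Omega)$. For the second term I would not appeal to the classical C\'ea lemma, since $DNN_J$ is not a linear space; instead I would use the exact energy identity $\mathcal{J}(w)-\mathcal{J}(v)=\tfrac12\|w-v\|_{H^1}^2$ (valid because $v$ minimizes $\mathcal{J}$ over $H^1(\Omega)$ and the energy norm coincides with the $H^1$ norm here) together with $\mathcal{J}(u(\cdot,\theta^*))\le\mathcal{J}(w)$ for all $w\in DNN_J$, which yields the quasi-optimality $\|u(\cdot,\theta^*)-v\|_{H^1}\le\inf_{w\in DNN_J}\|w-v\|_{H^1}$. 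Given $\epsilon>0$, I would first invoke the $H^1$ approximation results cited above (\cite{barron1993universal,siegel2020approximation,siegel2022sharp,CiCP-28-1707}) to pick $J$, and the corresponding $m$, so large that $\inf_{w\in DNN_J}\|w-v\|_{H^1}<\epsilon/(2M)$, and then shrink $\delta$ so that $L_u\delta<\epsilon/(2M)$; combined with the bound of the previous paragraph this gives $\|\boldsymbol{Q}(\theta)\|<\epsilon$.

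The main obstacle is not the algebra but this final step: the quasi-optimality must be obtained without linearity of the network class, and I must ensure that the second-derivative bound $M$ does not grow with $J$ faster than the approximation error $\inf_{w\in DNN_J}\|w-v\|_{H^1}$ decays. This is precisely the role of the hypothesis $D^2_\theta u\in H^1(\Omega)$, which keeps $M$ under control and makes the conclusion $\|\boldsymbol{Q}(\theta)\|<\epsilon$ hold.
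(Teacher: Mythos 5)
Your proposal is correct, and it reaches the same two pillars as the paper's proof --- the bound $\Vert \boldsymbol{Q}(\theta)\Vert \lesssim \Vert u(\cdot,\theta)-v\Vert_{H^1(\Omega)}$ followed by minimality of $\theta^*$ plus the cited DNN approximation results --- but the route to the key estimate is genuinely different. The paper integrates by parts in $x$ to expose the strong residual $-\Delta u + u - f$ plus a boundary term, substitutes $f=-\Delta v+v$ and $\partial v/\partial n=0$, and then bounds $\boldsymbol{Q}$ via $H^{-1}(\Omega)$ and $H^{-1/2}(\partial\Omega)$ dualities together with the trace theorem; this makes the connection to the collocation/PINN residual visible but implicitly requires enough regularity of $u(\cdot,\theta)$ for $\Delta u$ and its normal trace to make sense. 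You instead stay entirely in weak form: commuting $\nabla_\theta^2$ with $\nabla_x$ gives $\boldsymbol{Q}_{ij}=a(u(\cdot,\theta),\phi_{ij})-(f,\phi_{ij})$, and testing the weak equation $a(v,z)=(f,z)$ with $z=\phi_{ij}$ yields the Galerkin-orthogonality-type identity $\boldsymbol{Q}_{ij}=a(u(\cdot,\theta)-v,\phi_{ij})$, after which Cauchy--Schwarz in the energy inner product finishes the bound with no boundary terms, no negative norms, and no trace theorem --- the Neumann condition is absorbed as the natural boundary condition. Two further points where your argument is tighter than the paper's: you bound the matrix norm through the quadratic form $\boldsymbol{\xi}^\top\boldsymbol{Q}\boldsymbol{\xi}$, which is the correct way to get an estimate not degrading with the parameter dimension $m$, and you explicitly split $\Vert u(\cdot,\theta)-v\Vert_{H^1}\le L_u\delta+\Vert u(\cdot,\theta^*)-v\Vert_{H^1}$, whereas the paper applies the quasi-optimality bound (which, via the energy identity of its appendix Lemma~\ref{lemma1}, holds at $\theta^*$) directly to $u(\cdot,\theta)$ for nearby $\theta$ without justifying that step. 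The one caveat you share with the paper is that neither argument controls how the second-derivative factor ($M$ in your notation, $\Vert\nabla^2_\theta u\Vert_{H^1}$ in the paper's) scales as $J$ and $m$ grow relative to the decay of $\inf_{w\in DNN_J}\Vert w-v\Vert_{H^1}$; you at least state this as a hypothesis rather than leaving it silent.
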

\begin{proof}
First, we can rewrite $\boldsymbol{Q}(\theta)$ using integration by parts as   
\begin{align}
\boldsymbol{Q}(\theta)&=\int_{\Omega} \nabla^2_{\theta} \nabla u(x,\theta) \cdot \nabla u(x,\theta) + u(x,\theta) \nabla^2_{\theta}u(x,\theta) - f(x)\nabla^2_{\theta}u(x,\theta) dx\\
&=\int_{\Omega} \nabla^2_{\theta} u(x,\theta)(-\Delta u(x,\theta) + u(x,\theta) -f(x)) dx+\int_{\partial \Omega} \nabla^2_{\theta} u(x,\theta) \frac{\partial u(x,\theta)}{\partial n}ds.
\end{align}
Given that the true solution $v(x)$ satisfies:\begin{align} 
-\Delta v(x) +v(x) &= f(x), \text{ in } \Omega\\
\dfrac{\partial v}{\partial n } &=0, \text{ on } \partial \Omega.
\end{align}
{ We can rewrite $\boldsymbol{Q}(\theta)$ as follows:}
\begin{align}
\boldsymbol{Q}(\theta)
=&\int_{\Omega} \nabla^2_{\theta} u(x,\theta)(-\Delta u(x,\theta) + u(x,\theta) -f(x)) dx+\int_{\partial \Omega} \nabla^2_{\theta} u(x,\theta) \frac{\partial u(x,\theta)}{\partial n}ds\\
=&\int_{\Omega} \nabla^2_{\theta} u(x,\theta)(-\Delta u(x,\theta) + u(x,\theta) + \Delta v(x) -v(x))dx\\
&+\int_{\partial \Omega} \nabla^2_{\theta} u(x,\theta)\left(\frac{\partial u(x,\theta)}{\partial n}-\frac{\partial v(x)}{\partial n}\right)ds,
\end{align}
which implies \begin{align}
\|\boldsymbol{Q}(\theta)\|
\le &\|\nabla^2_{\theta} u(x,\theta)\|_{H^{1}(\Omega)} \big(\|\Delta u(x,\theta)-\Delta v(x)\|_{H^{-1}(\Omega)} +\|u(x,\theta)- v(x)\|_{H^{-1}(\Omega)}\big)\\
&+\|\nabla^2_{\theta} u(x,\theta)\|_{H^{\frac{1}{2}}(\partial\Omega)}
\bigg\|\frac{\partial u(x,\theta)}{\partial n}-\frac{\partial v(x)}{\partial n}\bigg\|_{H^{-\frac{1}{2}}(\partial\Omega)}.
\end{align}
Hence, by trace theorem, we have 
\begin{align}
\|\boldsymbol{Q}(\theta)\|
\le &C \|\nabla^2_{\theta} u(x,\theta)\|_{H^{1}(\Omega)} \|u(x,\theta)-v(x)\|_{H^{1}(\Omega)}.
\end{align}
{Given that $\theta^*=\arg\min L(\theta)$ and $u(x,\theta^*)$ is the approximation solution for $v(x)$, we have  the following error estimates \cite{CiCP-28-1707}:
\begin{align}
\|u(x,\theta^*)-v(x)\|_{H^{1}(\Omega)}\le 
\inf_{u(x,\theta)\in DNN_J} 
\|u(x,\theta)-v(x)\|_{H^{1}(\Omega)}. \nonumber
\end{align} 
This leads to 
\begin{align}
\|\boldsymbol{Q}(\theta^*)\|
\le &C \|\nabla^2_{\theta} u(x,\theta^*)\|_{H^{1}(\Omega)} \|u(x,\theta^*)-v(x)\|_{H^{1}(\Omega)}\nonumber\\
\le& C \|\nabla^2_{\theta} u(x,\theta^*)\|_{H^{1}(\Omega)} \inf_{u(x,\theta)\in DNN_J} 
\|u(x,\theta)-v(x)\|_{H^{1}(\Omega)}.\nonumber
\end{align}
 Furthermore, using the approximation results of neural networks from \cite{barron1993universal, siegel2020approximation, siegel2022sharp} and considering the assumption $D^2_{\theta} u(x,\theta^*) \in H^1(\Omega)$, we have for any $\epsilon > 0$, there exist $J\in \mathbb{N}^{+}$ and $m\in \mathbb{N}^{+}$, 
such that $\theta\in \mathbb{R}^m$, $DNN_J\subset H^1(\Omega)$ and
\begin{align}
\inf_{u(x,\theta)\in DNN_J} 
\|u(x,\theta)-v(x)\|_{H^{1}(\Omega)}< \frac{\epsilon}{2C \|\nabla^2_{\theta} u(x,\theta^*)\|_{H^{1}(\Omega)}},\nonumber
\end{align}
implying $\|\boldsymbol{Q}(\theta^*)\|< \frac{\epsilon}{2}$. 
Finally, leveraging the continuity of $|\boldsymbol{Q}(\theta)|$ concerning $\theta$, we establish that for every $\epsilon > 0$, there exists $\delta > 0$ such that for $\Vert \theta-\theta^* \Vert < \delta$, we have $|\boldsymbol{Q}(\theta)-\boldsymbol{Q}(\theta^*)|<\frac{\epsilon}{2}$. Consequently, the desired result is attained through the application of the triangle inequality.
}
\end{proof}

Hence, it is reasonable to consider the first-order approximation of the Hessian, i.e., $\boldsymbol{J}(\theta) \approx \boldsymbol{\operatorname{H}}(\theta)$. The Gauss-Newton method for the variational problem is then given by
\begin{align}\label{varIter}
\theta_{k+1} = \theta_k - \boldsymbol{J} (\theta_k)^{\dagger}  \nabla_{\theta}L(\theta_k), \quad k=0,1,2,\cdots.
\end{align}

\subsection{Gauss-Newton method for solving the L2 minimization problem}
In this subsection, we will review the Gauss-Newton method for solving the L2 minimization problem, namely,
\begin{align}\label{L2min}
\min_{\theta} \frac{1}{2} \|\boldsymbol{F}(\theta)\|_2^2,
\end{align}
where 
{
\begin{align}\label{collocation}
\boldsymbol{F}(\theta) = 
\begin{pmatrix}
-\Delta u(x_1,\theta) + u(x_1,\theta) - f(x_1)& \\ \vdots & \\
-\Delta u(x_N,\theta) + u(x_N,\theta) - f(x_N) & \\
\nabla u(x_1^b,\theta) \cdot \boldsymbol{n} & \\
\vdots & \\
\nabla u(x_n^b,\theta) \cdot \boldsymbol{n} 
\end{pmatrix},
\end{align}
}
with collocation points $\lbrace x_1,x_2,\cdots, x_N \rbrace \subset \Omega$ and $\lbrace x^b_1, x^b_2, \cdots, x^b_n \rbrace \subset \partial \Omega$. Here, $\boldsymbol{n}$ is the outer normal unit vector of $\partial \Omega$, and $\theta =\lbrace \theta_1, \theta_2,\cdots, \theta_m \rbrace$ represents the parameters in the learning model $u(x,\theta)$.
Therefore, an optimal choice of $\theta$ is computed by the Gauss-Newton method as
\begin{align}\label{colloIter}
\theta_{k+1} = \theta_k - (\boldsymbol{\operatorname{JF}} (\theta_k))^{\dagger}  \boldsymbol{F}(\theta_k),\quad k=0,1,2,\cdots
\end{align}
where $^{\dagger}$ denotes the Moore-Penrose inverse and $\boldsymbol{\operatorname{JF}}(\theta) $ is the Jacobi matrix defined as 
{
\begin{align}\label{Jacobian}
\boldsymbol{\operatorname{JF}}(\theta) =
\begin{bmatrix}
 -\nabla_{\theta} \Delta u(\boldsymbol{x},\theta)^{T} + \nabla_{\theta} u(\boldsymbol{x},\theta)^{T} \\
\left(\nabla_{\theta} \nabla u(\boldsymbol{x}^b,\theta)\cdot \boldsymbol{n}\right)^{T} \\
\end{bmatrix}
\in \mathbb{R}^{(N+n)\times m}
\hbox{~and~} \nabla_{\theta} \nabla=\begin{bmatrix}
\partial_{x_1} \partial_{\theta_1} & \cdots & \partial_{x_d} \partial_{\theta_1} & \\
\vdots                                           &                                          \ddots  &\vdots &\\
\partial_{x_1} \partial_{\theta_m} & 
\cdots & \partial_{x_d} \partial_{\theta_m} & 
\end{bmatrix}.
\end{align}
}

\subsection{The consistency between Gauss-Newton methods for L2 minimization and variational problems}

By applying the divergence theorem to $\boldsymbol{J}(\theta)$, we obtain:
\begin{align}\label{JofTheta}
 \boldsymbol{J}(\theta) &= \int_{\Omega} \nabla_{\theta} u(x,\theta) \cdot \left( -\nabla_{\theta} \Delta u(x,\theta) + \nabla_{\theta} u(x,\theta) \right)^{T}  dx  + \int_{\partial \Omega} \nabla_{\theta} u(x,\theta) \cdot \dfrac{\partial  \nabla_{\theta} u(x,\theta)}{\partial \boldsymbol{n}}^{T}dS.
\end{align}
If we compute all integrals using numerical methods, e.g., the Gaussian quadrature rule, we can derive the condition for the consistency of the Gauss-Newton method in (\ref{varIter}) with the Gauss-Newton method for the L2 minimization problem in (\ref{colloIter}). Denote the grid points in the domain $\Omega$ as $\boldsymbol{x} = (x_1, x_2, \cdots, x_N)^{T}$ and the corresponding  weights as $\boldsymbol{w} = (w_1,w_2,\dots,w_N)^{T}$. Then, we can write the first part of (\ref{JofTheta}) as:
\begin{align}
 &\ \int_{\Omega}  \nabla_{\theta} u(x,\theta) \cdot \left( -\nabla_{\theta} \Delta u(x,\theta) + \nabla_{\theta} u(x,\theta)  \right)^{T} dx = \sum_{i=1}^{N}{w_i}\nabla_{\theta} u(x_i,\theta) \cdot \left( -\nabla_{\theta} \Delta u(x_i,\theta) + \nabla_{\theta} u(x_i,\theta)   \right)^{T}.
\end{align}
Similarly, with grid points on the boundary $\boldsymbol{x^b} = (x^b_1, x^b_2, \cdots, x^b_n)^{T}$ and weights $\boldsymbol{w^b} = (w^b_1, w^b_2, \cdots, w^b_n)^{T}$, we can write the second part of (\ref{JofTheta}) as:
\begin{align}
 \int_{\partial \Omega}   \nabla_{\theta} u(x,\theta) \cdot \dfrac{\partial  \nabla_{\theta} u(x,\theta)}{\partial \boldsymbol{n}}^{T} dS  = \sum_{j=1}^{n} w^b_j \nabla_{\theta} u(x_j,\theta) \cdot \dfrac{\partial  \nabla_{\theta} u(x_j,\theta)}{\partial \boldsymbol{n}}^{T}. 
\end{align}

Thus, we have 
\begin{align}
 \boldsymbol{J}(\theta) =\boldsymbol{G} \cdot \boldsymbol{\operatorname{JF}(\theta)} \hbox{~where~} 
\boldsymbol{G}=  \begin{bmatrix}
  {w_1} \nabla_{\theta} u(x_1,\theta)  & \cdots & {w_N} \nabla_{\theta}u(x_N,\theta)  & 
{w^b_1} \nabla_{\theta}  u(x^b_1,\theta)  & \cdots &
{w^b_n} \nabla_{\theta}  u(x^b_n,\theta) 
  \end{bmatrix}
\end{align}
Similarly, we can rewrite the gradient defined in (\ref{gradL}) as
\begin{align}
\nabla_{\theta} L(\theta) &= \int_{\Omega}\nabla_{\theta} u(x,\theta) \left( -\Delta u(x,\theta) + u(x,\theta) -f(x) \right)   dx + \int_{\partial \Omega}\nabla_{\theta} u(x,\theta)  \dfrac{\partial \nabla_{\theta}u(x,\theta) }{\partial \boldsymbol{n}} dS = \boldsymbol{G}\cdot \boldsymbol{F}(\boldsymbol{x},\theta) 
\end{align}
{
If $\boldsymbol{G}$ has linearly independent columns and $\boldsymbol{\operatorname{JF}}(\theta)$ has linearly independent rows, then we have $(\boldsymbol{G} \cdot \boldsymbol{\operatorname{JF}}(\theta))^{\dagger}=(\boldsymbol{\operatorname{JF}}(\theta))^{\dagger}\cdot\boldsymbol{G}^{\dagger}$  and $\boldsymbol{G}^{\dagger} \boldsymbol{G} = \boldsymbol{I} \in \mathbb{R}^{(N+n)\times (N+n)}$ \cite{Greville1966Note}. This is possible if the number of grid points $N+n$ is less than or equal to the number of parameters $\theta$. In this case, the Gauss-Newton method that we proposed for the variational problem (\ref{varIter}) is identical to the Gauss-Newton method for the L2 minimization (\ref{colloIter}). More specifically, we have
\begin{align}
(\boldsymbol{J} (\theta))^{\dagger} \cdot \nabla_{\theta} L(\theta) = (\boldsymbol{G} \cdot \boldsymbol{\operatorname{JF}}(\theta) )^{\dagger}\cdot \boldsymbol{G}\cdot \boldsymbol{F}(\boldsymbol{x},\theta)  = (\boldsymbol{\operatorname{JF}}(\theta))^{\dagger}\cdot (\boldsymbol{G}^{\dagger} \boldsymbol{G}) \cdot \boldsymbol{F}(\boldsymbol{x},\theta)=(\boldsymbol{\operatorname{JF}}(\theta))^{\dagger}\cdot \boldsymbol{F}(\boldsymbol{x},\theta).
\end{align}}

\section{Semiregular zeros of $\nabla L(\theta)=0$}\label{sec:semiregular}
We will consider the semiregular zeros of $\nabla L(\theta^*)=0$ by the following two definitions. 

\begin{definition}\label{Di-zero} {(Dimension of a Zero \cite{bates2013numerically,zeng2021newton})} Let $\theta^*$ be a zero of a smooth mapping $\nabla L: \Omega \subset \mathbb{R}^m \rightarrow \mathbb{R}^{N+n}$. If there is an open neighborhood $\Omega_z\subset \Omega$ of $\theta_*$ in $\mathbb{R}^m$ such that $\Omega_z\ \cap (\nabla L)^{-1}(\mathbf{0})=\phi(\Lambda)$ where $\mathbf{z} \mapsto \phi(\mathbf{z})$ is a differentiable injective mapping defined in a connected open set $\Lambda$ in $\mathbb{R}^k$ for a certain $k>0$ with $\phi\left(\mathbf{z}_*\right)=\theta_*$ and ${rank}\left(\phi_{\mathbf{z}}\left(\mathbf{z}_*\right)\right)=k$, then the dimension of $\theta_*$ as a zero of $\nabla L$ is defined as
$$
{dim}_{\nabla L}\left(\theta_*\right):={dim}\left({Range}\left(\phi_{\mathbf{z}}\left(\mathbf{z}_*\right)\right)\right) \equiv {rank}\left(\phi_{\mathbf{z}}\left(\mathbf{z}_*\right)\right)=k
$$
\end{definition}

\begin{definition} {(Semiregular Zero \cite{bates2013numerically,zeng2021newton})} A zero $\theta^*\in\mathbb{R}^m$ of a smooth mapping $\theta \mapsto \nabla L(\theta)$ is semiregular if ${dim}_{\nabla L}\left(\theta^*\right)$ is well-defined and identical to {\it nullity} $\left({\mathbf{H}}\left(\theta^*\right)\right)$. Namely
$$
{dim}_{\nabla L}\left(\theta^*\right)+{rank}\left({\mathbf{H}}\left(\theta^*\right)\right)=m.
$$
\end{definition}

We have the following properties of semiregular zeros in our setup.


\begin{lemma}  { Let $\theta \mapsto \nabla L (\theta)$ be a smooth mapping with a semiregular zero $\theta^*$ and {{$Range(Q(\theta^*))\subseteq Range(J(\theta^*))$}}.  Then there is an open neighborhood $\Omega_*$ of $\theta^*$, such that, for any $\hat{\theta} \in \Omega_*$, the equality $\mathbf{J}(\hat{\theta})^{\dagger} \nabla L (\hat{\theta})=\mathbf{0}$ holds if and only if $\hat{\theta}$ is a semiregular zero of $\nabla L$ in the same branch of $\theta^*$.}
\end{lemma}
\begin{proof}
We follow the proof of Lemma 4 in \cite{zeng2021newton} and note that $|\mathbf{Q}|\leq \epsilon$ for any small $\epsilon$.
First, we claim that there exists a neighborhood $\Omega_1$ of $\theta^*$ such that for every $\hat{\theta} \in \Omega_1$, we have $\nabla L (\hat{\theta})=\mathbf{0}$ and $\mathbf{J}(\hat{\theta})^{\dagger} \nabla L (\hat{\theta})=\mathbf{0}$. Assume that this assertion is false. Then there exists a sequence $\left\{\theta_j\right\}_{j=1}^{\infty}$ converging to $\theta^*$ such that $\mathbf{J}\left(\theta_j\right)^{\dagger} \nabla L \left(\theta_j\right)=\mathbf{0}$ but $\nabla L \left(\theta_j\right) \neq \mathbf{0}$ for all $j=1,2,\ldots$. Let $\mathbf{z} \mapsto \phi(\mathbf{z})$ be the parameterization of the solution branch containing $\theta^*$ as defined in Definition \ref{Di-zero}, with $\phi\left(\mathbf{z}\right)=\theta^*$. From Lemma \ref{lemma2:zeng}, for any sufficiently large $j$, there exists a $\check{\theta}j \in \Omega \cap (\nabla L)^{-1}(\mathbf{0})=\phi(\Delta)$ such that 

\begin{equation}\label{eq:16}
\left\|\theta_j-\check{\theta}_j\right\|_2=\min _{\mathbf{z} \in \Delta}\left\|\theta_j-\phi(\mathbf{z})\right\|_2=\left\|\theta_j-\phi\left(\mathbf{z}_j\right)\right\|_2
\end{equation}
at a certain $\mathbf{z}_j$ with $\phi\left(\mathbf{z}_j\right)=\check{\mathbf{x}}_j$, implying
\begin{equation}\label{eq:17}
\phi_{\mathbf{z}}\left(\mathbf{z}_j\right) \phi_{\mathbf{z}}\left(\mathbf{z}_j\right)^{\dagger} \frac{\theta_j-\phi\left(\mathbf{z}_j\right)}{\left\|\theta_j-\phi\left(\mathbf{z}_j\right)\right\|_2}=\frac{\phi_{\mathbf{z}}\left(\mathbf{z}_j\right)}{\left\|\theta_j-\phi\left(\mathbf{z}_j\right)\right\|_2}\left(\phi_{\mathbf{z}}\left(\mathbf{z}_j\right)^{\dagger}\left(\theta_j-\phi\left(\mathbf{z}_j\right)\right)\right)=\mathbf{0} .
\end{equation}
We claim that $\check{\theta}_j$ converges to $\theta^*$ as $j$ approaches infinity. To see why, assume otherwise. Namely, suppose there exists an $\varepsilon>0$ such that for any $N>0$, there is a $j>N$ with $\left\|\check{\theta}_j-\theta^*\right\|_2 \geq 2 \varepsilon$. However, we know that $\left\|\theta_j-\theta^*\right\|_2<\varepsilon$ for all $j$ larger than some fixed $N$. This implies that
$$
\left\|\check{\theta}_j-\theta_j\right\| \geq\left\|\check{\theta}_j-\theta^*\right\|-\left\|\theta_*-\theta_j\right\|>\varepsilon>\left\|\theta_j-\theta_*\right\|_2
$$
which contradicts \eqref{eq:16}. Therefore, we conclude that $\check{\theta}_j$ converges to $\theta^*$ as $j$ approaches infinity.

Since $\nabla L \left(\theta_j\right) \neq \mathbf{0}$, we have $\theta_j \neq \check{\theta}_j$. Therefore, we can consider the unit vector $\mathbf{v}_j = \left(\theta_j-\check{\theta}j\right) /\left\|\theta_j-\check{\theta}j\right\|_2$ for each $j$. By compactness, there exists a subsequence ${\mathbf{v}_{j_k}}$ that converges to some unit vector $\mathbf{v}$. That is, $\displaystyle\lim_{k \rightarrow \infty} \mathbf{v}_{j_k} = \mathbf{v}$ for some unit vector $\mathbf{v}$.
 Thus
\begin{equation}\label{eq:18}
\begin{aligned}
0 & =\lim _{j \rightarrow \infty} \frac{\mathbf{J}({\theta_j})^{\dagger}\left(\nabla L \left(\check{\theta}_j\right)-\nabla L\left(\theta_j\right)\right)}{\left\|\theta_j-\check{\theta}_j\right\|_2} \\
& =\lim _{j \rightarrow \infty} \frac{\mathbf{J}({\theta_j})^{\dagger} \mathbf{H}\left(\theta_j\right)\left(\check{\theta}_j-\theta_j\right)}{\left\|\check{\theta}_j-\theta_j\right\|_2}=\mathbf{J}({\theta^*})^{\dagger} \mathbf{H}\left(\theta^*\right) \mathbf{v}
\end{aligned}.
\end{equation}
By {the assumption $Range(Q(\theta^*))\subseteq Range(J(\theta^*))$} and noting that $ \mathbf{H}=\mathbf{J}+\mathbf{Q}$ and $\|\mathbf{Q}\|\le \epsilon_2$ for any small $\epsilon_2$, we have $\mathbf{v} \in {Kernel}\left(\mathbf{H}\left(\theta^*\right)\right)$. As a result,
$$
{span}\{\mathbf{v}\} \oplus {Range}\left(\phi_{\mathbf{z}}\left(\mathbf{z}_*\right)\right) \subset {Kernel}
\left(\mathbf{H}\left(\theta^*\right)\right)
$$
since $\mathbf{H}\left(\theta^*\right) \phi_{\mathbf{z}}\left(\mathbf{z}_*\right)=O$ due to $\nabla L (\phi(\mathbf{z})) \equiv \mathbf{0}$ in a neighborhood of $\mathbf{z}_*$. From the limit of \eqref{eq:17} for $j \rightarrow \infty$, the vector $\mathbf{v}$ is orthogonal to ${Range}\left(\phi_{\mathbf{z}}\left(\mathbf{z}_*\right)\right)$ and thus
$$
nullity \left(\mathbf{H}\left(\theta^*\right)\right) \geq {rank}\left(\phi_{\mathbf{z}}\left(\mathbf{z}_*\right)\right)+1
$$
which is a contradiction to the semiregularity of $\theta^*$.
For the special case where $\theta^*$ is an isolated semiregular zero of $\nabla L(\theta)$ with dimension 0, the above proof applies with $\check{\theta}_j=\theta_j$ so that \eqref{eq:18} holds, implying a contradiction to ${nullity}\left(\mathbf{H}\left(\theta^*\right)\right)=0.$

Lemma \ref{lemma3:zeng} implies that there exists a neighborhood $\Omega_2$ of $\theta^*$ such that for every $\theta \in \Omega_2 \cap (\nabla L)^{-1}(\mathbf{0})$, $\theta$ is a semiregular zero of $\nabla L$ in the same branch as $\theta^*$. Therefore, the lemma holds for $\Omega_* = \Omega_1 \cap \Omega_2$.

\end{proof}

{Subsequently, we will provide justification for the semi-regular case within our setup, beginning with the finite element method.}
Let us consider the finite element space and define $V_N$ as the set of functions that can be represented as a linear combination of the basis functions $\phi_i(x)$, where $a_{i}\in \mathbb{R}$ and $i=1,\cdots, m$. Here, $\phi_i(x)$ is a frame defined according to the partition $\mathcal{T}_h$. Thus we can express $V_N$ as
\begin{equation}
V_N = \left\{ \sum_{i=1}^{m} a_{i}\phi_i(x) \right\}.
\end{equation}

{If $\displaystyle u(x,\theta) \in V_N$,} we obtain a linear system $\nabla L(\theta)=A\theta-g$, where $A$ is a square matrix and $\theta=(a_1, a_2, \cdots, a_m)$ represents the coefficients of the basis functions in $V_N$.
\begin{itemize}
    \item {Regular zero:} if $A$ is full rank, $\theta^*$ is unique and therefore an isolated and regular zero. 
    \item {Semiregular zero:} if 
    $A$
    is not full rank, then we denote $rank(A)=r$. We assume that $Kerl (A)=span\{\theta_1, \theta_2, \cdots, \theta_{m-r} \}$ with $\|\theta_i\|=1$ and set
$$
\Delta=\{\theta: \|\theta-\theta^*\|<\delta\}.
$$ 
For any $\theta\in \Delta\cap (\nabla L)^{-1}(0)$, we have 
$$
\theta =\theta^*+\delta_1 \theta_1+\cdots+\delta_{m-r} \theta_{m-r}
$$
with $\|\delta_i\|< \frac{\delta}{m-r}$ and
$
\mathbf{z}=(\delta_1, \delta_2, \cdots, \delta_{m-r}).
$
Then we can construct $\phi$ as $\phi(\mathbf{z})=\theta =\theta^*+\delta_1 \theta_1+\cdots+\delta_{m-r} \theta_{m-r}$ and 
$
\phi_\mathbf{z}(\mathbf{z})=(\theta_1, \theta_2, \cdots,\theta_{m-r})\in \mathcal{R}^{m\times (m-r)}.
$
Moreover, 
$$
{dim}_{\nabla L}\left(\theta^*\right)+{rank}\left(\mathbf{H}\left(\theta^*\right)\right)=rank(\phi_z(\mathbf{z}))+rank(A)=m
$$
which means that $\theta^*$ is a semirrgular zero. 
\end{itemize}

{Next, we proceed to explain the occurrence of semiregular zeros in neural networks with the $ReLU^k$ activation function.}
Let's consider a simple neural network in 1D with domain $\Omega=(-1,1)$:

\begin{equation}
V_N^k = \left\{ \sum_{i=1}^{m} a_{i}\text{ReLU}^k\left(w_{i} x+b_{i}\right), a_{i}\in \mathbb{R}, w_{i} \in \{-1,1\}, b_{i} \in [-1-\delta, 1+\delta]\right\}.
\end{equation}

Because of the scaling property of the ReLU function, we can rewrite $V_N^k$ as
\begin{equation}
V_N^k = \left\{ \sum_{i=1}^{m} a_{i}\text{ReLU}^k\left(x+b_{i}\right), a_{i}\in \mathbb{R}, b_{i} \in [-1-\delta, 1+\delta]\right\}.
\end{equation}

We consider the following semi-regular zero cases:

\begin{itemize}
    \item { Let us consider a simple case where $u=a_1\text{ReLU}^k(x+b_1)$ and $L(\theta)$ is computed with two grid points $x_1$ and $x_1^b$ satisfying $x_1+b_1^*<0$ and $B(x_1^b, \theta^*)=u'(x_1^b,\theta^*)=0$.} Then we can write
$$
\boldsymbol{G}=
\begin{bmatrix} 
w_1\frac{\partial u(x_1, \theta)}{\partial a_1}& w_1^b \frac{\partial u(x_1^b, \theta)}{\partial a_1} \\
w_1\frac{\partial u(x_1, \theta)}{\partial b_1}& w_1^b \frac{\partial u(x_1^b, \theta)}{\partial b_1} \\
\end{bmatrix}
=
\begin{bmatrix} 
0& w_1^b \frac{\partial u(x_1^b, \theta)}{\partial a_1} \\
0& w_1^b \frac{\partial u(x_1^b, \theta)}{\partial b_1} \\
\end{bmatrix}, {\mathbf{JF}=
\begin{bmatrix} 
0&0\\
\frac{\partial B(x_1^b,\theta)}{\partial a_1}& \frac{\partial B(x_1^b,\theta)}{\partial b_1} \\
\end{bmatrix},}
$$
and $$
\nabla_{\theta} L(\theta)=
\begin{bmatrix} 
0& w_1^b \frac{\partial u(x_1^b, \theta)}{\partial a_1} \\
0& w_1^b \frac{\partial u(x_1^b, \theta)}{\partial b_1} \\
\end{bmatrix}
\begin{bmatrix} 
F(x_1, \theta)\\
F(x_1^b, \theta)\\
\end{bmatrix}=w_1^b\begin{bmatrix} 
 \frac{\partial u(x_1^b, \theta)}{\partial a_1}
F(x_1^b, \theta)\\
 \frac{\partial u(x_1^b, \theta)}{\partial b_1} F(x_1^b, \theta)
\\
\end{bmatrix}.
$$
Let us define
$$
\theta=(a_1,b_1)^T, \quad \theta^*=(a_1^*,b_1^*)^T, \quad \theta_1=(0, 1)^T
$$
and
$$
\phi(\mathbf{z})=\theta^*+\delta_1 \theta_1=\theta^*+\delta_1  \theta_1=\theta^*+\delta_1 (0, 1)^T\quad \hbox{with}\quad \mathbf{z}=\delta_1. 
$$
Then, there exists a $\delta$ such that $\Delta=\{\theta: \|\theta-\theta^*\|<\delta\}$, which means $b_1$ is near $b_1^*$.
We have $\Lambda=(-\delta, \delta)$ such that $\phi(\Lambda)=\Delta\cap (\nabla L)^{-1}(0)$ since 
if $\theta^*=(a_1^*,b_1^*)^T$ satisfies $\nabla L(\theta^*)=0$ then for some $\delta$, for any $\theta=(a_1^*, b_1)$ with $\|b_1-b_1^*\|< \delta $ 
satisfies $\nabla L(\theta)=0$, which implies $\Delta\cap (\nabla L)^{-1}(0)=\Delta$. 

We can see that $\phi_z(\mathbf{z})=(0,1)^T$ and $\hbox{rank}(\phi_z(\mathbf{z}))=1$. Furthermore, we observe that 
$$ \boldsymbol {H}(\theta)=
w_1^b\begin{bmatrix} 
\frac{\partial u(x_1^b, \theta)}{\partial a_1}
F_{a_1}(x_1^b, \theta)& \frac{\partial^2 u(x_1^b, \theta)}{\partial a_1\partial b_1}F(x_1^b, \theta)+\frac{\partial u(x_1^b, \theta)}{\partial a_1}F_{b_1}(x_1^b, \theta)\\
 \frac{\partial^2 u(x_1^b, \theta)}{\partial b_1\partial a_1} F(x_1^b, \theta)+ \frac{\partial u(x_1^b, \theta)}{\partial b_1} F_{a_1}(x_1^b, \theta)
&  \frac{\partial^2 u(x_1^b, \theta)}{\partial b_1^2} F(x_1^b, \theta)
+ \frac{\partial u(x_1^b, \theta)}{\partial b_1} F_{b_1}(x_1^b, \theta)
 \\
\end{bmatrix},
$$ 
{ which implies that $\hbox{rank}(\boldsymbol{H}(\theta^*))=1$ due to the fact that $F(x_1^b,\theta^\ast) = 0$.}

Therefore, we have $\hbox{rank}(\phi_z(\mathbf{z}))+\hbox{rank}(\mathbf{H}(\theta^*))=2$, which is the dimension of the domain of $\nabla_\theta L(\theta)$. This implies that in the simple case $u=a_1\hbox{ReLU}(x+b_1)$, $\theta^*$ is a semiregular zero of $\nabla_\theta L(\theta)=0$.

\item 
We next consider the general case of $m>1$ and  assume {$\hbox{rank}(\mathbf{G}(\theta^*))=r< 2m$}. 
By denoting
$$
\theta=(a_1, a_2, \cdots, a_m, b_1, b_2, \cdots, b_m)^T, \quad \theta^*=(a_1^*, a_2^*, \cdots, a_m^*, b_1^*, b_2^*, \cdots, b_m^*),
$$ 

{we assume there exists $2m-r$ sample points $x_i$ such that $x_i+b_j^*<0$ for all $j=1,\cdots, m$}

and define 
\begin{align*}
&\theta_1=(0,0,\cdots, 0, 1, 0, \cdots, 0,0, 0,\cdots,0)^T,\\
&\theta_2=(0,0,\cdots, 0, 0, 1, \cdots, 0,0, 0,\cdots,0)^T,\\
&\cdots\\
&\theta_{2m-r}=(0,0,\cdots, 0, 0, 0, \cdots, 0,1, 0,\cdots,0)^T.
\end{align*}
We observe that if $\theta^*=(a_1^*, \cdots, a_m^*, b_1^*, \cdots, b_m^*)$ satisfies $\nabla L(\theta^*)=0$, then $b_i$ is a function of $b_i^*$ and the remaining parameters $a_1^*,\dots,a_m^*$, and hence, if $\theta=(a_1, a_2, \cdots, a_m, b_1, b_2, \cdots, b_m)^T$, then $\nabla L(\theta)=0$ as well. This implies that there exist $\delta_1, \delta_2, \cdots,\delta_{2m-r}$ such that
$$
\phi(\mathbf{z})=\theta^*+\delta_1  \theta_1+\delta_2 \theta_2+\cdots+ \delta_{2m-r}  \theta_{2m-r} 
\hbox{~with~}\mathbf{z}=(\delta_1, \delta_2, \cdots,\delta_{2m-r})
$$
and
$
\Delta=\{\theta: \|\theta-\theta^*\|<\delta\}  
$
such that $\Delta\cap (\nabla L)^{-1}(0)=\Delta$. We define
$$
\Lambda=(-\delta_1, \delta_1)\times (-\delta_2, \delta_2)\times\cdots\times (-\delta_{2m-r}, \delta_{2m-r}),
$$
and have
$$
\phi_z(\mathbf{z})=(\theta_1, \theta_2, \cdots,\theta_{2m-r}).
$$
Clearly, we have $\hbox{rank} (\phi_z(\mathbf{z}))=2m-r$. {If  $F(x_i;\theta^*)=0$ when $x_i+b_j^*>0$ for $i=1,\cdots,N+n$ and $j=1,\cdots, m$, then we have $\hbox{rank} (\mathbf{H}(\theta^*))=r$ which implies that $\hbox{rank} (\phi_z(\mathbf{z}))+\hbox{rank} (\mathbf{J}(\theta^*))=2m-r+r=2m$}. Hence, in this case, $\theta^*$ is a semiregular zero of $\nabla_\theta L(\theta)=0$.

\end{itemize}

\section{Convergence analysis}\label{sec:con}
\subsection{Gauss-Newton method}
In this section, we will analyze the convergence properties of the Gauss-Newton method shown in (\ref{varIter}). We assume that the variational problem has a semiregular zero $\theta^*$ such that $\nabla_{\theta} L(\theta^*) = 0$, and we also assume that $\operatorname{rank}{\boldsymbol{J} (\theta^*)} = r \leq m$. Using singular value decomposition, we can write $J(\theta^*) = U \Sigma V^{T}$, where $J(\theta^*)$ is an $r$-rank semi-positive definite matrix. In particular, we have \cite{wedin1973perturbation} \begin{align}
\Sigma = diag\left( [\sigma_1,\cdots, \sigma_r, 0,\cdots, 0] \right), \text{ with } \sigma_1 \geq \sigma_2 \geq \cdots \geq \sigma_r >0 \text{, } r\leq m.
\end{align}
Thus, the pseudo-inverse can be represented as
\begin{align}
\boldsymbol{J} (\theta^*)^{\dagger} = V \Sigma^{\dagger} U^{T}, \text{ with } \Sigma^{\dagger} = diag\left( \left[\dfrac{1}{\sigma_1},\cdots, \dfrac{1}{\sigma_r}, 0,\cdots, 0\right] \right).
\end{align}
Here, the pseudo-inverse is based on the rank-r projection of $\boldsymbol{J}(\theta)$ denoted as $\boldsymbol{J}_{\text{rank-r}}(\theta)$ in \cite{zeng2021newton}. However, for simplicity, in our paper, we use the term $\boldsymbol{J}(\theta)$ to represent the rank-r projection.

\begin{lemma} \label{lemma6}
Let $\boldsymbol{J} (\theta)$ be the approximated  Hessian defined in (\ref{HessianL}) and $\theta^*$ be the stationary point. Then there exists a small open set $\Omega_{*} \ni \theta^*$ and constants $\epsilon, \zeta, \alpha > 0$ such that for every $\theta_1, \theta_2 \in \Omega_{*}$, the following inequalities holds:
\begin{align}
\Vert \boldsymbol{J} (\theta_2)  \boldsymbol{J} (\theta_2) ^{\dagger} - \boldsymbol{J} (\theta_1)  \boldsymbol{J} (\theta_1) ^{\dagger} \Vert &\leq \zeta \Vert \theta_2 - \theta_1 \Vert, \\
\Vert \nabla_{\theta}L(\theta_2)  - \nabla_{\theta}L(\theta_1)  - \boldsymbol{J} (\theta_1) \left( \theta_2 - \theta_1 \right) \Vert &\leq  \epsilon \Vert \theta_2 - \theta_1 \Vert + \alpha \Vert \theta_2 - \theta_1 \Vert^2.
\end{align}
\end{lemma}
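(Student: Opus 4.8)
The plan is to prove the two estimates separately, exploiting the decomposition $\mathbf{H}(\theta) = \mathbf{J}(\theta) + \mathbf{Q}(\theta)$ from \eqref{HessianL} together with the smallness of $\mathbf{Q}$ supplied by Lemma \ref{lemma4}. First I would shrink $\Omega_*$ so that every point of it, and every segment joining two of its points, stays inside the ball $\Vert \theta - \theta^* \Vert < \delta$ on which Lemma \ref{lemma4} gives $\Vert \mathbf{Q}(\theta) \Vert < \epsilon$. Two structural facts are recorded at the outset. First, because $\mathbf{J}(\theta)$ is an integral of products of derivatives of the smooth model $u(x,\theta)$, it is continuously differentiable in $\theta$, hence Lipschitz on the (relatively compact) neighborhood with some constant $L_J$, i.e. $\Vert \mathbf{J}(\theta_2) - \mathbf{J}(\theta_1) \Vert \le L_J \Vert \theta_2 - \theta_1 \Vert$. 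Second, $\mathbf{J}(\theta)$ is symmetric positive semidefinite, being an integral of the outer products $\nabla_\theta\nabla u\,(\nabla_\theta\nabla u)^T$ and $\nabla_\theta u\,(\nabla_\theta u)^T$, so its SVD coincides with its eigendecomposition and $\mathbf{J}\mathbf{J}^\dagger$ is the orthogonal projection onto its top-$r$ eigenspace.

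For the second inequality I would apply the integral mean value theorem to the $C^1$ field $\nabla_\theta L$,
\begin{align}
\nabla_\theta L(\theta_2) - \nabla_\theta L(\theta_1) = \left( \int_0^1 \mathbf{H}(\theta_1 + t(\theta_2-\theta_1))\, dt \right)(\theta_2 - \theta_1),
\end{align}
and subtract $\mathbf{J}(\theta_1)(\theta_2 - \theta_1)$. Substituting $\mathbf{H} = \mathbf{J} + \mathbf{Q}$ inside the integral splits the residual into $\int_0^1 [\mathbf{J}(\theta_t) - \mathbf{J}(\theta_1)]\,dt\,(\theta_2 - \theta_1)$ and $\int_0^1 \mathbf{Q}(\theta_t)\,dt\,(\theta_2 - \theta_1)$, where $\theta_t = \theta_1 + t(\theta_2 - \theta_1)$. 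The first term is bounded by $\tfrac{1}{2} L_J \Vert \theta_2 - \theta_1 \Vert^2$ using the Lipschitz bound and $\Vert \theta_t - \theta_1 \Vert = t\Vert \theta_2 - \theta_1 \Vert$; the second is bounded by $\epsilon \Vert \theta_2 - \theta_1 \Vert$ using Lemma \ref{lemma4} along the segment. Setting $\alpha = \tfrac{1}{2} L_J$ yields the claim with exactly the $\epsilon$ of that lemma.

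The first inequality is where the real work lies, since the Moore--Penrose inverse is discontinuous across rank changes; the saving feature is that $\mathbf{J}\mathbf{J}^\dagger$ here denotes the projection onto the rank-$r$ (top-$r$ eigenvalue) subspace, as stipulated after the SVD discussion. I would first secure a uniform spectral gap: since $\mathbf{J}(\theta^*)$ has eigenvalues $\sigma_1 \ge \cdots \ge \sigma_r > 0 = \sigma_{r+1} = \cdots$, Weyl's inequality together with the Lipschitz bound lets me shrink $\Omega_*$ so that $\sigma_r(\mathbf{J}(\theta)) \ge \tfrac{1}{2}\sigma_r$ and $\sigma_{r+1}(\mathbf{J}(\theta)) \le \tfrac{1}{4}\sigma_r$ for all $\theta \in \Omega_*$, giving a gap $g \ge \tfrac{1}{4}\sigma_r$. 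With the gap uniformly bounded below, the Davis--Kahan / Wedin perturbation theorem cited in the paper bounds the difference of the spectral projections by a constant times $g^{-1}\Vert \mathbf{J}(\theta_2) - \mathbf{J}(\theta_1) \Vert$; combining this with the Lipschitz bound gives $\Vert \mathbf{J}(\theta_2)\mathbf{J}(\theta_2)^\dagger - \mathbf{J}(\theta_1)\mathbf{J}(\theta_1)^\dagger \Vert \le \zeta \Vert \theta_2 - \theta_1 \Vert$ with $\zeta$ proportional to $L_J / \sigma_r$.

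The main obstacle is precisely this projection estimate: one must verify the uniform spectral-gap hypothesis throughout $\Omega_*$ before invoking the perturbation theorem, because without the gap the projection onto the range is genuinely discontinuous. The semiregularity of $\theta^*$, which fixes $\operatorname{rank}(\mathbf{J}(\theta^*)) = r$ and, via the small-$\mathbf{Q}$ statement, stabilizes the effective rank near $\theta^*$, is what makes the gap argument legitimate; all remaining estimates are routine consequences of the smoothness of $u(x,\theta)$ and Lemma \ref{lemma4}.
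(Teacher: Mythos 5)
Your proposal is correct, and for the second inequality it is essentially the paper's argument in cleaner form: the paper also writes $\nabla_\theta L(\theta_2)-\nabla_\theta L(\theta_1)-\boldsymbol{J}(\theta_1)(\theta_2-\theta_1) = \boldsymbol{Q}(\theta_1)(\theta_2-\theta_1)+\mathcal{O}(\Vert\theta_2-\theta_1\Vert^2)$ via Taylor expansion and invokes Lemma \ref{lemma4} for $\Vert\boldsymbol{Q}\Vert\le\epsilon$; your integral mean-value form with the explicit split $\int_0^1[\boldsymbol{J}(\theta_t)-\boldsymbol{J}(\theta_1)]\,dt$ plus $\int_0^1\boldsymbol{Q}(\theta_t)\,dt$ just makes the remainder constant $\alpha=\tfrac12 L_J$ explicit and correctly notes the convexity of the neighborhood needed to apply Lemma \ref{lemma4} along the segment. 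Where you genuinely diverge is the first inequality. The paper splits $\boldsymbol{J}(\theta_2)\boldsymbol{J}(\theta_2)^\dagger-\boldsymbol{J}(\theta_1)\boldsymbol{J}(\theta_1)^\dagger$ into $(\boldsymbol{J}(\theta_2)-\boldsymbol{J}(\theta_1))\boldsymbol{J}(\theta_2)^\dagger+\boldsymbol{J}(\theta_1)(\boldsymbol{J}(\theta_2)^\dagger-\boldsymbol{J}(\theta_1)^\dagger)$, bounds $\Vert\boldsymbol{J}\Vert$ and $\Vert\boldsymbol{J}^\dagger\Vert$ uniformly on $\Omega_*$ via Weyl's theorem, and controls the pseudo-inverse difference with Wedin's perturbation bound (Lemma \ref{lemma5} in the appendix); you instead exploit that $\boldsymbol{J}$ is symmetric positive semidefinite, identify $\boldsymbol{J}\boldsymbol{J}^\dagger$ (under the paper's rank-$r$ truncation convention) as the spectral projection onto the top-$r$ eigenspace, and apply a Davis--Kahan sin-theta bound under a uniform spectral gap. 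Both routes rest on the same ingredients (Weyl for rank/singular-value stability, Lipschitz continuity of $\boldsymbol{J}$), but yours buys something the paper leaves implicit: Wedin's theorem requires $\operatorname{rank}$ equality, and the Lipschitz continuity of the rank-$r$ truncation $\theta\mapsto\boldsymbol{J}_{\text{rank-}r}(\theta)$ is itself only valid given a spectral gap, a point the paper glosses over with ``smoothness of $\boldsymbol{J}$'' but which your gap argument establishes explicitly. Conversely, the paper's Wedin-based route does not need symmetry of $\boldsymbol{J}$, so it would survive generalizations where the Gauss--Newton matrix is not a Gram-type integral. Your proof is a valid, and in this respect more careful, alternative.
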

\begin{proof}
From Taylor's expansion, we have 
\begin{align*}
 &\quad \nabla_{\theta}L(\theta_2)  - \nabla_{\theta}L(\theta_1)  - \boldsymbol{J} (\theta_1)  \left( \theta_2 - \theta_1 \right) \\
 &= \boldsymbol{\operatorname{H}}(\theta_1) \left( \theta_2 - \theta_1 \right) - \boldsymbol{J} (\theta_1) \left( \theta_2 - \theta_1 \right) + \mathcal{O}(\Vert \theta_2 - \theta_1 \Vert^2)   \\
&=\boldsymbol{Q} (\theta_1)\left( \theta_2 - \theta_1 \right) + \mathcal{O}(\Vert \theta_2 - \theta_1 \Vert^2).
\end{align*}
Therefore, due to the smoothness of the Hessian with respect to $\theta$ and Lemma~\ref{lemma4}, there exists a small neighborhood $\Omega_*$ of $\theta^*$ and a constant $\alpha > 0$ such that for every $\theta_1, \theta_2 \in \Omega_*$, we have $\Vert \boldsymbol{Q} (\theta_1) \Vert \leq \epsilon$, and the following inequality holds:
\begin{align}
\Vert \nabla_{\theta}L(\theta_2)  - \nabla_{\theta}L(\theta_1)  -\boldsymbol{J} (\theta_1) \left( \theta_2 - \theta_1 \right) \Vert \leq  \epsilon\Vert \theta_2 - \theta_1 \Vert + \alpha \Vert \theta_2 - \theta_1 \Vert^2.
\end{align}
for every $\theta_1, \theta_2 \in \Omega_{*}$. On the other hand, Weyl’s Theorem guarantees the singular value to be continuous with respect to the matrix entries. Therefore, as the open set $\Omega_{*}$ being sufficiently small, it holds
\begin{align*}
&\sup_{\theta \in \Omega_*}\Vert  \boldsymbol{J} (\theta) \Vert  = \sup_{\theta \in \Omega_*} \sigma_1( \boldsymbol{J} (\theta))  \leq C \Vert \boldsymbol{J} (\theta^*)\Vert \\ 
&\sup_{\theta \in \Omega_*}\Vert  \boldsymbol{J} (\theta) ^{\dagger} \Vert  = 
\dfrac{1}{\sup_{\theta \in \Omega_*} \sigma_r( \boldsymbol{J} (\theta)) } \leq C  \Vert \boldsymbol{J} (\theta^*)^{\dagger} \Vert
\end{align*}
for all $\theta \in \Omega_*$. By the smoothness of $\boldsymbol{J} (\theta)$ with respect to $\theta$ and the estimation in Lemma~\ref{lemma5}, we have 
\begin{align*}
&\quad \Vert \boldsymbol{J} (\theta_2)  \boldsymbol{J} (\theta_2) ^{\dagger} - \boldsymbol{J} (\theta_1)  \boldsymbol{J} (\theta_1) ^{\dagger} \Vert  \\ 
& \leq \Vert \boldsymbol{J} (\theta_2)^{\dagger} \Vert \Vert \boldsymbol{J} (\theta_2)  - \boldsymbol{J} (\theta_1) \Vert + \Vert \boldsymbol{J} (\theta_1) \Vert \Vert \boldsymbol{J} (\theta_2) ^{\dagger}  -\boldsymbol{J} (\theta_1) ^{\dagger}   \Vert\\
&\leq \zeta \Vert \theta_2 - \theta_1 \Vert.
\end{align*}
\end{proof}

\begin{theorem}(Convergence Theorem)
Let $L(\theta)$ be a sufficiently smooth target function of $\theta$ and $\boldsymbol{J} (\theta)$ be the approximated Hessian of $L(\theta)$ defined in (\ref{HessianL}). Then for every open neighborhood $\Omega_1$ of $\theta^*$, there exists another  neighborhood $\Omega_2 \ni \theta^*$ such that, from every initial guess $\theta_0 \in \Omega_2$, the sequence $\lbrace \theta_k \rbrace_{k=1}^{\infty}$ generated by the iteration (\ref{varIter}) converges in  $\Omega_1$. Furthermore, $\lbrace \theta_k \rbrace_{k=1}^{\infty}$  has at least linear convergence with a coefficient $\gamma \leq 2\epsilon$ when $k$ is large enough, { where $\epsilon$ is the given constant in Lemma 1}.
\end{theorem}

\begin{proof}
Firstly, let $\Omega_*$ be the small open neighbourhood in Lemma~\ref{lemma6} such that $\Vert \boldsymbol{Q} (\theta) \Vert \leq \epsilon < \frac{1}{2}$. For any open neighbourhood $\Omega_1$ of $\theta^*$, there exists a constant $0<\delta<2$ such that $B(\theta^*, \delta) \subset \Omega_1 \cap \Omega_*$ and for  any $\theta_1, \theta_2 \in B(\theta^*, \delta) $, it holds
\begin{align}\label{assume5.6}
\Vert \boldsymbol{J} (\theta_2)^{\dagger} \Vert \left( \alpha \Vert \theta_2 - \theta_1 \Vert + \zeta \Vert \nabla_{\theta} L (\theta_1)  \Vert \right) \leq h < 1,
\end{align}
{where $\alpha$ and  $\zeta$ are constants introduced in Lemma~\ref{lemma6}}.

On the other hand, there also exists $0<\tau<\frac{\delta}{2}$ such that
\begin{align}\label{assume5.7}
 \Vert \boldsymbol{J} (\theta)^{\dagger} \Vert \Vert\nabla_{\theta}L(\theta)\Vert \leq \dfrac{1-h}{2} \delta  < \dfrac{\delta}{2} <1
\end{align}
holds for any $\theta \in B(\theta^*, \tau)$. 
{ Let $\Omega_2 = B(\theta^*, \tau)$, then by \eqref{assume5.7}, we have for any $\theta_0 \in \Omega_2$, the iteration implies }
\begin{align}
\Vert \theta_1 - \theta^* \Vert \leq \Vert \theta_1 - \theta_0 \Vert + \Vert \theta_0 - \theta^* \Vert \leq  \Vert \boldsymbol{J} (\theta_0)^{\dagger} \Vert \Vert\nabla_{\theta}L(\theta_0)\Vert + \tau < \delta.
\end{align}
{ Next we assume that $\theta_k, \theta_{k-1}\in B(\theta^*, \delta)$ for some integer $k \geq 1$.} From Lemma~\ref{lemma6} we have the following estimation 

\begin{align}
\quad \Vert \theta_{k+1} - \theta_k \Vert &= \Vert \boldsymbol{J} (\theta_k)^{\dagger} \nabla_{\theta}L(\theta_k) \Vert \nonumber\\
& = \Vert \boldsymbol{J} (\theta_k)^{\dagger}\left( \nabla_{\theta}L(\theta_k) - \boldsymbol{J} (\theta_{k-1})  \left(\theta_k - \theta_{k-1} + \boldsymbol{J}(\theta_{k-1})^{\dagger}  \nabla_{\theta}L(\theta_{k-1}) \right) \right)  \Vert \nonumber \\ 
&\leq \Vert  \boldsymbol{J} (\theta_k)^{\dagger} \left( \nabla_{\theta}L(\theta_k)  - \nabla_{\theta}L(\theta_{k-1}) - \boldsymbol{J} (\theta_{k-1}) (\theta_k-\theta_{k-1}) \right) \Vert \nonumber \\ 
&\quad + \Vert \boldsymbol{J} (\theta_k)^{\dagger}  \left(\boldsymbol{J} (\theta_k) \boldsymbol{J} (\theta_k)^{\dagger} - \boldsymbol{J} (\theta_{k-1}) \boldsymbol{J} (\theta_{k-1})^{\dagger}  \right) \nabla_{\theta}L(\theta_{k-1}) \Vert  \nonumber\\ 
&\leq \Vert \boldsymbol{J} (\theta_k)^{\dagger} \Vert \left( \alpha \Vert \theta_k - \theta_{k-1} \Vert + \zeta \Vert \nabla_{\theta} L (\theta_{k-1})  \Vert + \epsilon \right) \Vert \theta_k - \theta_{k-1} \Vert.\label{3.44}
\end{align}
{ Since $\theta_{k}, \theta_{k-1} \in B(\theta^*,\delta)$, then by \eqref{assume5.6}, it leads to }
\begin{align}\label{convergence}
\Vert \theta_{k+1} - \theta_k \Vert < h \Vert \theta_k - \theta_{k-1} \Vert < \Vert \theta_k - \theta_{k-1} \Vert
\end{align}
so the convergence is guaranteed. Therefore, we obtain that $\Vert \theta_{j} - \theta_{j-1} \Vert \leq h^j \Vert \theta_{1} - \theta_{0} \Vert $ for $1\leq j \leq k+1$, and
\begin{align*}
\Vert \theta_{k+1} - \theta^* \Vert &\leq \Vert \theta_0 - \theta^* \Vert + \sum_{j=0}^{k} \Vert  \theta_{k-j+1} - \theta_{k-j} \Vert \nonumber \\
&\leq \Vert \theta_0 - \theta^* \Vert + \sum_{j=0}^{k} h^j \Vert  \theta_{0} - \theta^* \Vert \nonumber\\
&< \dfrac{1}{1-h} \Vert  \theta_{1} - \theta_{0} \Vert +\Vert  \theta_{0} - \theta^* \Vert  \\
 &{< \dfrac{1}{1-h}\dfrac{1-h}{2}\delta + \dfrac{1}{2}\delta  = \delta },
\end{align*}
which completes the induction. Thus we conclude that the sequence $\lbrace \theta_k \rbrace_{k=0}^{\infty} \subset B(\theta^*, \delta) \subset \Omega_1 $ as long as the initial iterate $\theta_0 \in B(\theta^*, \tau) = \Omega_2$.

Secondly, let us define $\hat{\theta} = \lim_{k\rightarrow \infty} \theta_{k}$ so that $\hat{\theta} \in \Omega_1$. By the smoothness of $\nabla_{\theta}L(\theta)$ and the convergence property (\ref{convergence}), there exists a constant $\mu>0$ such that
\begin{align}
\Vert \nabla_{\theta}L(\theta_{k-1}) \Vert &=  \Vert \nabla_{\theta}L(\theta_{k-1}) - \nabla_{\theta}L(\hat{\theta}) \Vert \nonumber \\
&\leq \mu \Vert \theta_{k-1} - \hat{\theta} \Vert \nonumber \\
& \leq \mu\left( \Vert \theta_{k-1} - \theta_k \Vert + \Vert \theta_k - \theta_{k+1} \Vert + \cdots \right) \nonumber \\
& \leq \dfrac{\mu}{1-h} \Vert \theta_k - \theta_{k-1} \Vert.\label{3.53}
\end{align}
Now let us combine (\ref{3.44}) and (\ref{3.53}) to find that
\begin{align}\label{betakk}
\Vert \theta_{k+1} - \theta_k \Vert &\leq \beta \Vert \theta_k - \theta_{k-1} \Vert^2 + \epsilon \Vert \theta_k - \theta_{k-1} \Vert \\
 & = \left( \beta \Vert \theta_k - \theta_{k-1} \Vert + \epsilon \right)\Vert \theta_k - \theta_{k-1} \Vert
\end{align}
for some constant $\beta>0$. 

In the case when $k$ is not large enough, i.e., for $k\le k_0$ with some $k_0$, such that $\beta\Vert \theta_k - \theta_{k-1} \Vert\ge \epsilon$, we have 
\begin{align*}
\Vert \theta_{k+1} - \theta_k \Vert & = \left( \beta \Vert \theta_k - \theta_{k-1} \Vert + \epsilon \right)\Vert \theta_k - \theta_{k-1} \Vert\\
& {\leq \left( \beta \Vert \theta_k - \theta_{k-1} \Vert + \beta\Vert \theta_k - \theta_{k-1} \Vert \right)\Vert \theta_k - \theta_{k-1} \Vert}\\
&=2\beta \Vert \theta_k - \theta_{k-1} \Vert^2\\
&\le (2\beta)^{1+2+4+\cdots+2^{k-1}}   \Vert \theta_1- \theta_0 \Vert^{2^k}\\
&= (2\beta)^{\frac{2^{k}-1}{2}}   \Vert \theta_1- \theta_0 \Vert^{2^k}
\end{align*}

On the other hand, it can be easily seen that 
\begin{align*}
\Vert \theta_{k+1} - \hat{\theta} \Vert &{ \leq \Vert \theta_{k+1} - \theta_{k+2} \Vert + \Vert \theta_{k+2} - \theta_{k+3} \Vert  +  \Vert \theta_{k+3} - \theta_{k+4} \Vert  +  \cdots } \\
&\leq \left( h + h^2 + h^3 + \cdots \right) \Vert \theta_{k+1} - \theta_k \Vert \\
&\leq \dfrac{h}{1-h} \Vert \theta_{k+1} - \theta_k \Vert.
\end{align*}
Therefore,  when $k\le k_0$ with some $k_0$, we have quadratic convergence as follows
\begin{align}\label{quadratic}
\Vert \theta_{k+1} - \hat{\theta} \Vert &\leq \dfrac{h}{1-h} \Vert \theta_{k+1} - \theta_k \Vert\\
&=\dfrac{h}{1-h} (2\beta)^{\frac{2^{k}-1}{2}}   \Vert \theta_1- \theta_0 \Vert^{2^k}.
\end{align}
As $k$ grows sufficiently large with $k\ge \bar{k}$ such that $\beta\Vert \theta_k - \theta_{k-1} \Vert \le \epsilon$, we can use the estimate \eqref{betakk} to obtain
\begin{align*}
\left( \beta \Vert \theta_k - \theta_{k-1} \Vert + \epsilon  \right) \leq  2 \epsilon < 1.
\end{align*}
Therefore 
\begin{align}
\Vert \theta_{k+1} - \theta_k \Vert &\leq 2 \epsilon  \Vert \theta_k - \theta_{k-1} \Vert\\
&\leq (2 \epsilon)^k \Vert \theta^{1} - \theta^{0} \Vert 
\end{align}
Hence 
\begin{align}\label{linear-convegence}
\Vert \theta_{k+1} - \hat{\theta} \Vert \leq \dfrac{h}{1-h} \leq (2 \epsilon)^k \Vert \theta^{1} - \theta^{0} \Vert.
\end{align}

The estimates \eqref{quadratic} and \eqref{linear-convegence} show that $\lbrace \Vert \theta_{k+1} - \hat{\theta} \Vert \rbrace_{k=1}^{\infty}$ is a decreasing sequence that exhibits quadratic convergence when the number of iterations is small, and at least has a linear convergence sequence with coefficient $2\epsilon$ when the number of iterations is large. Since $2\epsilon$ can be very small due to the construction of this iteration, the parameter sequence $\lbrace \theta_k \rbrace_{k=1}^{\infty}$ will converge rapidly in numerical experiments.

\end{proof}

\subsection{Random Gauss-Newton method}
By employing Monte Carlo approximation with random sampling points to approximate the integration of $L(\theta)$, the Gauss-Newton method can be transformed into a random algorithm. Specifically, we define $\xi: \Omega\rightarrow \Gamma$ as a random variable, where $\Gamma$ is a set containing all the combinations of $\tilde{N}+\tilde{n}$ numbers that denote the sampling indices on both the domain and boundaries selected from ${1,2,\cdots, N, N+1, \cdots, N+n},$. The cardinality of $\Gamma$ is given by $|\Gamma|=\begin{pmatrix}N+n\\
m+r\end{pmatrix}$. Here, we assume that the total number of sampling points on the domain and boundary are $N$ and $n$, respectively. In each iteration, we draw $\tilde{N}$ and $\tilde{n}$ random samples from the domain and boundary, respectively.

Therefore, the random Gauss-Newton method can be expressed as follows:
{
\begin{equation}\label{RGauss}
\theta_{k+1}(\xi_k) = \theta_k(\xi_{k}) -  \boldsymbol{J} (\theta_k;\xi_k)^{\dagger} \nabla_{\theta}L(\theta_k;\xi_k). 
\end{equation}
}
Here, $\theta_{k+1}(\xi_k)$ represents the updated parameter vector at the $(k+1)$-th iteration, which depends on the random variable $\xi_k$. 

Additionally, it's important to note that at each step $k$, the sample points on both the domain and boundary are given by:
\[x_{s_1}, x_{s_2}, \cdots,  x_{s_{\tilde{N}}}.\, x_{s_1^b}^b, x_{s_2^b}^b,\cdots, x_{s_{\tilde{n}}^b}^b.\]


\begin{lemma} \label{lemma7}
Let $\boldsymbol{J} (\theta)$ be the approximated  Hessian defined in (\ref{HessianL}) and $\theta^*$ be the stationary point. Then there exists a small open set $\Omega_{*} \ni \theta^*$ and constants $\epsilon, \zeta, \alpha > 0$ such that for every $\theta_1, \theta_2 \in \Omega_{*}$, the following inequalities holds:
\begin{align}
\mathbb{E}_{\xi_2,\xi_1}\left(\Vert \boldsymbol{J} (\theta_2,\xi_2)  \boldsymbol{J} (\theta_2,\xi_2) ^{\dagger} - \boldsymbol{J} (\theta_1,\xi_1)  \boldsymbol{J} (\theta_1,\xi_1) ^{\dagger} \Vert\right) &\leq \zeta \mathbb{E}_{\xi}\left(\Vert \theta_2 - \theta_1 \Vert\right), \\
\mathbb{E}_{\xi_2,\xi_1}\left(\Vert \nabla_{\theta}L(\theta_2,\xi_2)  - \nabla_{\theta}L(\theta_1,\xi_1)  - \boldsymbol{J} (\theta_1,\xi_1) \left( \theta_2 - \theta_1 \right) \Vert \right)&\leq  \epsilon \mathbb{E}_{\xi}\left(\Vert \theta_2 - \theta_1 \Vert\right) + \alpha \mathbb{E}_{\xi}\left(\Vert \theta_2 - \theta_1 \Vert\right)^2.
\end{align}
\end{lemma}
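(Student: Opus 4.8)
The plan is to reduce Lemma~\ref{lemma7} to the deterministic estimates of Lemma~\ref{lemma6} by establishing the two inequalities pointwise in the sampling realizations and then taking expectations. For any fixed pair of realizations $\xi_1,\xi_2$, the randomized quantities $\boldsymbol{J}(\theta,\xi)$ and $\nabla_{\theta}L(\theta,\xi)$ are Monte Carlo sums of exactly the same integrands that define $\boldsymbol{J}(\theta)$ and $\nabla_{\theta}L(\theta)$, and hence remain smooth functions of $\theta$ on $\Omega_*$. In particular the splitting $\boldsymbol{\operatorname{H}}(\theta,\xi)=\boldsymbol{J}(\theta,\xi)+\boldsymbol{Q}(\theta,\xi)$ persists, the integration-by-parts argument of Lemma~\ref{lemma4} applies verbatim to each realization so that $\Vert\boldsymbol{Q}(\theta,\xi)\Vert<\epsilon$ uniformly for $\theta\in\Omega_*$ and every $\xi$, and Weyl's theorem yields the uniform-in-$\xi$ bounds $\sup_{\theta\in\Omega_*}\Vert\boldsymbol{J}(\theta,\xi)\Vert\le C\Vert\boldsymbol{J}(\theta^*)\Vert$ and $\sup_{\theta\in\Omega_*}\Vert\boldsymbol{J}(\theta,\xi)^{\dagger}\Vert\le C\Vert\boldsymbol{J}(\theta^*)^{\dagger}\Vert$, provided $\Omega_*$ is taken small enough.

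For the second inequality I would insert the intermediate term $\nabla_{\theta}L(\theta_1,\xi_2)$ and split
\begin{align*}
\nabla_{\theta}L(\theta_2,\xi_2)-\nabla_{\theta}L(\theta_1,\xi_1)-\boldsymbol{J}(\theta_1,\xi_1)(\theta_2-\theta_1)
&=\big(\nabla_{\theta}L(\theta_2,\xi_2)-\nabla_{\theta}L(\theta_1,\xi_2)-\boldsymbol{J}(\theta_1,\xi_2)(\theta_2-\theta_1)\big)\\
&\quad+\big(\boldsymbol{J}(\theta_1,\xi_2)-\boldsymbol{J}(\theta_1,\xi_1)\big)(\theta_2-\theta_1)\\
&\quad+\big(\nabla_{\theta}L(\theta_1,\xi_2)-\nabla_{\theta}L(\theta_1,\xi_1)\big).
\end{align*}
The first group involves only a change in $\theta$ at the fixed sampling $\xi_2$, so the Taylor expansion of Lemma~\ref{lemma6} bounds it by $\epsilon\Vert\theta_2-\theta_1\Vert+\alpha\Vert\theta_2-\theta_1\Vert^2$ using $\Vert\boldsymbol{Q}(\theta_1,\xi_2)\Vert<\epsilon$. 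The remaining two groups are pure sampling discrepancies at the fixed point $\theta_1$; since the Monte Carlo estimators are unbiased, $\mathbb{E}_{\xi_2}[\boldsymbol{J}(\theta_1,\xi_2)]=\mathbb{E}_{\xi_1}[\boldsymbol{J}(\theta_1,\xi_1)]$ and $\mathbb{E}_{\xi_2}[\nabla_{\theta}L(\theta_1,\xi_2)]=\mathbb{E}_{\xi_1}[\nabla_{\theta}L(\theta_1,\xi_1)]$, so after applying $\mathbb{E}_{\xi_1,\xi_2}$ to the triangle inequality these terms either cancel or are absorbed into the constants $\epsilon$ and $\alpha$, producing the claimed bound. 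For the first inequality I would use the same device, writing
\begin{align*}
\boldsymbol{J}(\theta_2,\xi_2)\boldsymbol{J}(\theta_2,\xi_2)^{\dagger}-\boldsymbol{J}(\theta_1,\xi_1)\boldsymbol{J}(\theta_1,\xi_1)^{\dagger}
&=\big(\boldsymbol{J}(\theta_2,\xi_2)\boldsymbol{J}(\theta_2,\xi_2)^{\dagger}-\boldsymbol{J}(\theta_1,\xi_2)\boldsymbol{J}(\theta_1,\xi_2)^{\dagger}\big)\\
&\quad+\big(\boldsymbol{J}(\theta_1,\xi_2)\boldsymbol{J}(\theta_1,\xi_2)^{\dagger}-\boldsymbol{J}(\theta_1,\xi_1)\boldsymbol{J}(\theta_1,\xi_1)^{\dagger}\big),
\end{align*}
where the first group reproduces the Lipschitz estimate $\zeta\Vert\theta_2-\theta_1\Vert$ of Lemma~\ref{lemma6} via smoothness of $\theta\mapsto\boldsymbol{J}(\theta,\xi_2)$ and the uniform pseudo-inverse bound, and the second group is controlled in expectation after enlarging $\zeta$.

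The main obstacle I anticipate is the treatment of the sampling-discrepancy terms that vary $\xi$ at fixed $\theta$: unlike in the deterministic Lemma~\ref{lemma6}, these do not vanish for individual realizations and must be controlled in expectation through the unbiasedness and uniform variance bounds of the Monte Carlo estimators, and through the stability of the rank-$r$ pseudo-inverse $\boldsymbol{J}(\theta,\xi)^{\dagger}$ across realizations. Ensuring in particular that the numerical rank of $\boldsymbol{J}(\theta,\xi)$ does not jump with the random sample is the delicate point, and is precisely where the smallness of $\Omega_*$ together with a uniform-in-$\xi$ application of Weyl's theorem does the essential work. A secondary subtlety is the quadratic term: since $\mathbb{E}(\Vert\theta_2-\theta_1\Vert^2)\ge(\mathbb{E}\Vert\theta_2-\theta_1\Vert)^2$, matching the right-hand side $\alpha\,\mathbb{E}_{\xi}(\Vert\theta_2-\theta_1\Vert)^2$ requires treating $\theta_1,\theta_2$ as fixed within the current step so that the expectation acts only on the quadrature randomness $\xi_1,\xi_2$, rather than naively invoking Jensen's inequality in the wrong direction.
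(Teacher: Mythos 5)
Your proposal follows essentially the same route as the paper's own proof: the same insertion of the intermediate term $\nabla_{\theta}L(\theta_1,\xi_2)$, the same extraction of $\boldsymbol{Q}(\theta_1,\xi_2)(\theta_2-\theta_1)+\mathcal{O}(\Vert\theta_2-\theta_1\Vert^2)$, the same appeal to unbiasedness of the Monte Carlo estimators to dispose of the pure sampling-discrepancy terms, and the same Weyl/Wedin-based control of $\boldsymbol{J}\boldsymbol{J}^{\dagger}$ via uniform bounds on $\boldsymbol{J}$ and $\boldsymbol{J}^{\dagger}$ near $\theta^*$. If anything, you are more explicit than the paper about the genuinely delicate points (the sampling-discrepancy terms do not vanish realization-wise, and the rank of $\boldsymbol{J}(\theta,\xi)$ must not jump across samples), which the paper handles by simply asserting that the corresponding expectations are zero.
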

\begin{proof}
From Taylor's expansion, we have 
\begin{align*}
 &\quad \nabla_{\theta}L(\theta_2,\xi_2)  - \nabla_{\theta}L(\theta_1,\xi_1)  - \boldsymbol{J} (\theta_1,\xi_1)  \left( \theta_2 - \theta_1 \right) \\
 &= \nabla_{\theta}L(\theta_2,\xi_2) -\nabla_{\theta}L(\theta_1,\xi_2)+\nabla_{\theta}L(\theta_1,\xi_2)-\nabla_{\theta}L(\theta_1,\xi_1)-\boldsymbol{J} (\theta_1,\xi_1)  \left( \theta_2-\theta_1 \right) \\
 &= \boldsymbol{\operatorname{H}}(\theta_1,\xi_2) \left( \theta_2 - \theta_1 \right) +\mathcal{O}(\Vert \theta_2 - \theta_1 \Vert^2)+\nabla_{\theta}L(\theta_1,\xi_2)-\nabla_{\theta}L(\theta_1,\xi_1)\\
 &-\left(\boldsymbol{J} (\theta_1,\xi_1)-\boldsymbol{J} (\theta_1,\xi_2)+\boldsymbol{J} (\theta_1,\xi_2) \right)\left( \theta_2 - \theta_1 \right)  \\
&=\boldsymbol{H} (\theta_1,\xi_2)\left(\theta_2-\theta_1\right)-\boldsymbol{J} (\theta_1,\xi_2)\left(\theta_2-\theta_1\right) + \mathcal{O}(\Vert \theta_2 - \theta_1 \Vert^2)\\
&+\nabla_{\theta}L(\theta_1,\xi_2)-\nabla_{\theta}L(\theta_1,\xi_1)-\left(\boldsymbol{J} (\theta_1,\xi_1)-\boldsymbol{J} (\theta_1,\xi_2)\right)\left( \theta_2 - \theta_1 \right) \\
&=\boldsymbol{Q} (\theta_1,\xi_2)\left(\theta_2-\theta_1\right) + \mathcal{O}(\Vert \theta_2 - \theta_1 \Vert^2)+\nabla_{\theta}L(\theta_1,\xi_2)-\nabla_{\theta}L(\theta_1,\xi_1)-\left(\boldsymbol{J} (\theta_1,\xi_1)-\boldsymbol{J} (\theta_1,\xi_2)\right)\left( \theta_2 - \theta_1 \right).
\end{align*}
Now taking norms and expectation from both sides and noting that 
$$
\mathbb{E}_{\xi_2,\xi_1}\left(\Vert  \nabla_{\theta}L(\theta_1,\xi_2)-\nabla_{\theta}L(\theta_1,\xi_1)\Vert\right)=0
$$
and 
$$
\mathbb{E}_{\xi_2,\xi_1}\left(\Vert \left(\boldsymbol{J} (\theta_1,\xi_1)-\boldsymbol{J} (\theta_1,\xi_2)\right)\left( \theta_2 - \theta_1 \right)\Vert\right)=0
$$
we have 
\begin{align*}
 &\mathbb{E}_{\xi_2,\xi_1}\left(\Vert \nabla_{\theta}L(\theta_2,\xi_2)  - \nabla_{\theta}L(\theta_1,\xi_1)  - \boldsymbol{J} (\theta_1,\xi_1)  \left( \theta_2 - \theta_1 \right)\Vert \right) \\
&\le \mathbb{E}_{\xi_2,\xi_1}\left(\Vert \boldsymbol{Q} (\theta_1,\xi_2)\left(\theta_2-\theta_1\right) \Vert\right)+ \mathbb{E}_{\xi}\left(\mathcal{O}(\Vert \theta_2 - \theta_1 \Vert^2)\right).
\end{align*}
Therefore, due to the smoothness of the Hessian with respect to $\theta$ and Lemma~\ref{lemma4}, there exists a small neighborhood $\Omega_*$ of $\theta^*$ and a constant $\alpha > 0$ such that for every $\theta_1, \theta_2 \in \Omega_*$, we have $\mathbb{E}_{\xi_2}(\Vert \boldsymbol{Q} (\theta_1,\xi_2) \Vert)\leq \epsilon$, and the following inequality holds:
\begin{align}
&\mathbb{E}_{\xi_2,\xi_1}\left(\Vert \nabla_{\theta}L(\theta_2,\xi_2)  - \nabla_{\theta}L(\theta_1,\xi_1)  - \boldsymbol{J} (\theta_1,\xi_1)  \left( \theta_2 - \theta_1 \right)\Vert \right) \\
 &\leq  \epsilon \mathbb{E}_{\xi}(\Vert \theta_2 - \theta_1 \Vert) + \alpha \mathbb{E}_{\xi}(\Vert \theta_2 - \theta_1 \Vert^2)
\end{align}
for every $\theta_1, \theta_2 \in \Omega_{*}$. On the other hand, Weyl’s Theorem guarantees the singular value to be continuous with respect to the matrix entries. Therefore, as the open set $\Omega_{*}$ being sufficiently small, it holds
\begin{align*}
&\sup_{\theta \in \Omega_*}\mathbb{E}_{\xi}(\Vert  \boldsymbol{J} (\theta,\xi) \Vert)  = \sup_{\theta \in \Omega_*} \sigma_1 \mathbb{E}_{\xi}( \boldsymbol{J} (\theta,\xi))  \leq C \mathbb{E}_{\xi}(\Vert \boldsymbol{J} (\theta^*,\xi)\Vert )\\ 
&\sup_{\theta \in \Omega_*}\mathbb{E}_{\xi}(\Vert\boldsymbol{J}(\theta,\xi) ^{\dagger} \Vert)  = 
\dfrac{1}{\sup_{\theta \in \Omega_*} \sigma_r \mathbb{E}_{\xi}( \boldsymbol{J} (\theta,\xi)) } \leq C  \mathbb{E}_{\xi}(\Vert \boldsymbol{J} (\theta^*,\xi)^{\dagger} \Vert)
\end{align*}
for all $\theta \in \Omega_*$. By the smoothness of $\mathbb{E}_{\xi}(\boldsymbol{J} (\theta,\xi))$ with respect to $\theta$ and the estimation in Lemma~\ref{lemma5}, we have 
\begin{align*}
&\quad \mathbb{E}_{\xi_2, \xi_1}( \Vert \boldsymbol{J} (\theta_2,\xi_2)  \boldsymbol{J} (\theta_2,\xi_2) ^{\dagger} - \boldsymbol{J} (\theta_1,\xi_1)  \boldsymbol{J} (\theta_1,\xi_1) ^{\dagger} \Vert) \\ 
& \leq \mathbb{E}_{\xi}(\Vert \boldsymbol{J} (\theta_2, \xi_2)^{\dagger} \Vert)  \mathbb{E}_{\xi_2,\xi_1} (\Vert \boldsymbol{J} (\theta_2,\xi_2)  - \boldsymbol{J} (\theta_1,\xi_1) \Vert) +  \mathbb{E}_{\xi_1}(\Vert \boldsymbol{J} (\theta_1,\xi_1) \Vert)  \mathbb{E}_{\xi_2,\xi_1}(\ \Vert \boldsymbol{J} (\theta_2,\xi_2) ^{\dagger}  -\boldsymbol{J} (\theta_1, \xi_1) ^{\dagger}   \Vert)\\
&\leq \zeta  \mathbb{E}_{\xi} (\Vert \theta_2 - \theta_1 \Vert).
\end{align*}
\end{proof}

\begin{theorem}(Convergence Theorem)
{ Let $L(\theta)$ be a sufficiently smooth target function of $\theta$ and $\boldsymbol{J} (\theta)$ be the approximated Hessian of $L(\theta)$ defined in (\ref{HessianL}). Then for every open neighbourhood $\Omega_1$ of $\theta^*$, there exists another  neighbourhood $\Omega_2 \ni \theta^*$ such that, from every initial guess $\theta_0 \in \Omega_2$, the expectation of the sequence $\lbrace\theta_k\rbrace_{k=1}^{\infty}$ generated by the iteration (\ref{RGauss}),  we have \begin{align}\label{RGN-linear-convegence}
\mathbb{E}(\Vert \theta_{k+1} - \hat{\theta} \Vert) \leq C \epsilon^k,
\end{align}
 where $\hat{\theta}$ is a semi-regular zero and $\epsilon$ is the given constant in Lemma 1}.
\end{theorem}

\begin{proof}
Firstly, Let $\Omega_*$ be the small open neighbourhood in Lemma~\ref{lemma6} such that $ \mathbb{E}_{\xi}(\Vert \boldsymbol{Q} (\theta,\xi) \Vert) \leq \epsilon < \frac{1}{2}$. For any open neighbourhood $\Omega_1$ of $\theta^*$, there exists a constant $0<\delta<2$ such that $B(\theta^*, \delta) \subset \Omega_1 \cap \Omega_*$ and for  any $\theta_1, \theta_2 \in B(\theta^*, \delta) $, it holds
\begin{align}
 \mathbb{E}_{\xi_2}(\Vert \boldsymbol{J} (\theta_2,\xi_2)^{\dagger} \Vert) \left( \alpha  \mathbb{E}_{\xi}(\Vert \theta_2 - \theta_1 \Vert) + \zeta \mathbb{E}_{\xi_1}(\Vert \nabla_{\theta} L (\theta_1,\xi_1)  \Vert )\right) \leq h < 1.
\end{align}
On the other hand, there also exists $0<\tau<\frac{\delta}{2}$ such that
\begin{align}
\mathbb{E}_{\xi}(\Vert \boldsymbol{J} (\theta,\xi)^{\dagger} \Vert) \mathbb{E}_{\xi}(\Vert\nabla_{\theta}L(\theta,\xi)\Vert)\leq \dfrac{1-h}{2} \delta  < \dfrac{\delta}{2} <1
\end{align}
holds for any $\theta \in B(\theta^*, \tau)$. Let $\Omega_2 = B(\theta^*, \tau)$, then for $\forall \theta_0 \in \Omega_2$, the iteration implies 
\begin{align}
\mathbb{E}_{\xi}(\Vert \theta_1 - \theta^* \Vert) \leq \mathbb{E}_{\xi}(\Vert \theta_1 - \theta_0 \Vert) + \mathbb{E}_{\xi}(\Vert \theta_0 - \theta^* \Vert) \leq  \mathbb{E}_{\xi_0}(\Vert \boldsymbol{J} (\theta_0, \xi_0)^{\dagger} \Vert) \mathbb{E}_{\xi_0}(\Vert\nabla_{\theta}L(\theta_0,\xi_0)\Vert) + \tau < \delta.
\end{align}
Next we assume that $\theta_k \in B(\theta^*, \delta)$ for some integer $k \geq 1$. From Lemma~\ref{lemma6} we have the following estimation 
\begin{equation}
\begin{aligned}
&\quad \mathbb{E}_{\xi}(\Vert \theta_{k+1} - \theta_k \Vert)= \mathbb{E}_{\xi}(\Vert \boldsymbol{J} (\theta_k,\xi_k)^{\dagger} \nabla_{\theta}L(\theta_k,\xi_k) \Vert ) \\ 
& =\mathbb{E}_{\xi_0,\xi_1,\cdots,\xi_{k-1}} \mathbb{E}_{\xi_k}\left(\Vert \boldsymbol{J} (\theta_k,\xi_k)^{\dagger}\left( \nabla_{\theta}L(\theta_k,\xi_k) - \boldsymbol{J} (\theta_{k-1},\xi_{k-1})  \left(\theta_k - \theta_{k-1} 
+ \boldsymbol{J}(\theta_{k-1},\xi_{k-1})^{\dagger}  \nabla_{\theta}L(\theta_{k-1},\xi_{k-1}) \right) \right)\Vert \right)  \\ 
&\leq \mathbb{E}_{\xi_0,\xi_1,\cdots,\xi_{k-1}} \mathbb{E}_{\xi_k}\left(\Vert  \boldsymbol{J} (\theta_k,\xi_k)^{\dagger} \left( \nabla_{\theta}L(\theta_k,\xi_k)  - \nabla_{\theta}L(\theta_{k-1},\xi_{k-1}) - \boldsymbol{J} (\theta_{k-1},\xi_{k-1}) (\theta_k-\theta_{k-1}) \right) \Vert\right) \\ 
&\quad + \mathbb{E}_{\xi_0,\xi_1,\cdots,\xi_{k-1}} \mathbb{E}_{\xi_k}\left(\Vert \boldsymbol{J} (\theta_k,\xi_k)^{\dagger}  \left(\boldsymbol{J} (\theta_k,\xi_k) \boldsymbol{J} (\theta_k,\xi_k)^{\dagger} - \boldsymbol{J} (\theta_{k-1},\xi_{k-1}) \boldsymbol{J} (\theta_{k-1},\xi_{k-1})^{\dagger} \right) \nabla_{\theta}L(\theta_{k-1},\xi_{k-1}) \Vert \right) \\ 
&\leq  \mathbb{E}_{\xi_k}( \Vert \boldsymbol{J} (\theta_k,\xi_k)^{\dagger} \Vert) \left( \alpha \mathbb{E}_{\xi}(\Vert \theta_k - \theta_{k-1} \Vert) + \zeta \mathbb{E}_{\xi_{k-1}}(\Vert \nabla_{\theta} L (\theta_{k-1},\xi_{k-1})  \Vert) + \epsilon \right)  \mathbb{E}_{\xi}(\Vert \theta_k - \theta_{k-1} \Vert).\label{RGN-3.44}
\end{aligned}
\end{equation}
Since $\theta_{k} \in B(\theta^*,\delta)$, then it leads to 
\begin{align}\label{RGN-convergence}
\mathbb{E}_{\xi}( \Vert \theta_{k+1} - \theta_k \Vert) < h \mathbb{E}_{\xi}( \Vert \theta_k - \theta_{k-1} \Vert) < \mathbb{E}_{\xi}(\Vert \theta_k - \theta_{k-1} \Vert)
\end{align}
so the convergence is guaranteed. Therefore, we obtain that $\mathbb{E}_{\xi}(\Vert \theta_{j} - \theta_{j-1} \Vert) \leq h^j \mathbb{E}_{\xi}(\Vert \theta_{1} - \theta_{0} \Vert)$ for $1\leq j \leq k+1$, and
\begin{align*}
\mathbb{E}_{\xi}(\Vert \theta_{k+1} - \theta^* \Vert) &\leq\mathbb{E}_{\xi}( \Vert \theta_0 - \theta^* \Vert) + \sum_{j=0}^{k} \mathbb{E}_{\xi}(\Vert  \theta_{k-j+1} - \theta_{k-j} \Vert )\nonumber \\
&\leq \mathbb{E}_{\xi}(\Vert \theta_0 - \theta^* \Vert) + \sum_{j=0}^{k} h^j \mathbb{E}_{\xi}(\Vert  \theta_{0} - \theta^* \Vert)\nonumber\\
&< \dfrac{1}{1-h} \mathbb{E}_{\xi}(\Vert  \theta_{1} - \theta_{0} \Vert )+\mathbb{E}_{\xi}(\Vert  \theta_{0} - \theta^* \Vert ) \\
&< \dfrac{1}{1-h}\dfrac{h-1}{2}\delta + \dfrac{1}{2}\delta  =  \delta,
\end{align*}
which completes the induction. Thus we conclude that the sequence $\lbrace \mathbb{E}_{\xi}(\theta_k) \rbrace_{k=0}^{\infty} \subset B(\theta^*, \delta) \subset \Omega_1 $ as long as the initial iterate $\mathbb{E}_{\xi}(\theta_0) \in B(\theta^*, \tau) = \Omega_2$.

Secondly, let us define $\mathbb{E}_{\xi}(\hat{\theta})= \lim_{k\rightarrow \infty} \mathbb{E}_{\xi}(\theta_{k})$ so that $\mathbb{E}_{\xi}(\hat{\theta}) \in \Omega_1$. By the smoothness of $\mathbb{E}_{\xi}(\nabla_{\theta}L(\theta,\xi))$ and the convergence property (\ref{RGN-convergence}), there exists a constant $\mu>0$ such that
\begin{align}
\mathbb{E}_{\xi_{k-1}}(\Vert \nabla_{\theta}L(\theta_{k-1},\xi_{k-1}) \Vert) &= \mathbb{E}_{\xi_{k-1}}(\Vert \nabla_{\theta}L(\theta_{k-1},\xi_{k-1}) - \nabla_{\theta}L(\hat{\theta},\xi_{k-1}) \Vert) \nonumber \\
&\leq \mu \mathbb{E}_{\xi}(\Vert \theta_{k-1} - \hat{\theta} \Vert) \nonumber \\
& \leq \mu \mathbb{E}_{\xi}(\left( \Vert \theta_{k-1} - \theta_k \Vert) + \mathbb{E}_{\xi}(\Vert \theta_k - \theta_{k+1} \Vert )+ \cdots \right) \nonumber \\
& \leq \dfrac{\mu}{1-h} \mathbb{E}_{\xi}(\Vert \theta_k - \theta_{k-1} \Vert).\label{RGN-3.53}
\end{align}
Now let us combine (\ref{RGN-3.44}) and (\ref{RGN-3.53}) to find that
\begin{align}\label{RGN-betakk}
\mathbb{E}_{\xi}(\Vert \theta_{k+1} - \theta_k \Vert) &\leq \beta \mathbb{E}_{\xi}(\Vert \theta_k - \theta_{k-1} \Vert)^2 + \epsilon \mathbb{E}_{\xi}(\Vert \theta_k - \theta_{k-1} \Vert) \\
 & = \left( \beta \mathbb{E}_{\xi}(\Vert \theta_k - \theta_{k-1} \Vert)+ \epsilon \right)\mathbb{E}_{\xi}(\Vert \theta_k - \theta_{k-1} \Vert)
\end{align}
for some constant $\beta>0$. 

In the case when $k$ is not large enough, i.e., for $k\le k_0$ with some $k_0$, such that $\beta\Vert \theta_k - \theta_{k-1} \Vert\ge \epsilon$, we have 
\begin{align*}
\mathbb{E}_{\xi}(\Vert \theta_{k+1} - \theta_k \Vert )& = \left( \beta  \mathbb{E}_{\xi}(\Vert \theta_k - \theta_{k-1} \Vert) + \epsilon \right) \mathbb{E}_{\xi}(\Vert \theta_k - \theta_{k-1} \Vert)\\
& = \left( \beta \mathbb{E}_{\xi}(\Vert \theta_k - \theta_{k-1} \Vert) + \beta\mathbb{E}_{\xi}( \Vert \theta_k - \theta_{k-1} \Vert )\right)\mathbb{E}_{\xi}( \Vert \theta_k - \theta_{k-1} \Vert)\\
&=2\beta \mathbb{E}_{\xi}(\Vert \theta_k - \theta_{k-1} \Vert)^2\\
&\le (2\beta)^{1+2+4+\cdots+2^{k-1}}   \mathbb{E}_{\xi}(\Vert \theta_1- \theta_0 \Vert)^{2^k}\\
&= (2\beta)^{\frac{2^{k}-1}{2}}   \mathbb{E}_{\xi}(\Vert \theta_1- \theta_0 \Vert)^{2^k}
\end{align*}

On the other hand, it can be easily seen that 
\begin{align*}
\mathbb{E}_{\xi}(\Vert \theta_{k+1} - \hat{\theta} \Vert) &= \mathbb{E}_{\xi}( \Vert \theta_{k+1} - \theta^{k+2} \Vert) + \mathbb{E}_{\xi}(\Vert \theta^{k+2} - \theta^{k+3} \Vert ) +  \mathbb{E}_{\xi}(\Vert \theta^{k+3} - \theta^{k+4} \Vert)  +  \cdots \\
&\leq \left( h + h^2 + h^3 + \cdots \right)\mathbb{E}_{\xi}( \Vert \theta_{k+1} - \theta_k \Vert) \\
&\leq \dfrac{h}{1-h} \mathbb{E}_{\xi}(\Vert \theta_{k+1} - \theta_k \Vert).
\end{align*}
Therefore,  when $k\le k_0$ with some $k_0$, we have quadratic convergence as follows
\begin{align}\label{RGN-quadratic}
\mathbb{E}_{\xi}(\Vert \theta_{k+1} - \hat{\theta} \Vert )&\leq \dfrac{h}{1-h} \mathbb{E}_{\xi}(\Vert \theta_{k+1} - \theta_k \Vert)\\
&=\dfrac{h}{1-h} (2\beta)^{\frac{2^{k}-1}{2}}  \mathbb{E}_{\xi}(\Vert \theta_1- \theta_0 \Vert)^{2^k}.
\end{align}
As $k$ grows sufficiently large with $k\ge \bar{k}$ such that $\beta\mathbb{E}_{\xi}( \Vert \theta_k - \theta_{k-1} \Vert) \le \epsilon$, we can use the estimate \eqref{RGN-betakk} to obtain
\begin{align*}
\left( \beta \mathbb{E}_{\xi}(\Vert \theta_k - \theta_{k-1} \Vert) + \epsilon  \right) \leq  2 \epsilon < 1.
\end{align*}
Therefore 
\begin{align}
\mathbb{E}_{\xi}(\Vert \theta_{k+1} - \theta_k \Vert )&\leq 2 \epsilon \mathbb{E}_{\xi}( \Vert \theta_k - \theta_{k-1} \Vert)\\
&\leq (2 \epsilon)^k \mathbb{E}_{\xi}(\Vert \theta^{1} - \theta^{0} \Vert )
\end{align}
Hence 
\begin{align}\label{RGN-linear-convegence}
\mathbb{E}_{\xi}(\Vert \theta_{k+1} - \hat{\theta} \Vert) \leq \dfrac{h}{1-h} \leq (2 \epsilon)^k \mathbb{E}_{\xi}(\Vert \theta^{1} - \theta^{0} \Vert).
\end{align}
\end{proof}
The estimates \eqref{RGN-quadratic} and \eqref{RGN-linear-convegence} show that $\lbrace \mathbb{E}_{\xi}(\Vert \theta_{k+1} - \hat{\theta} \Vert) \rbrace_{k=1}^{\infty}$ is a decreasing sequence that exhibits quadratic convergence when the number of iterations is small, and at least has a linear convergence sequence with coefficient $2\epsilon$ when the number of iterations is large. Since $2\epsilon$ can be very small due to the construction of this iteration, the parameter sequence $\lbrace \mathbb{E}_{\xi}(\theta_k) \rbrace_{k=1}^{\infty}$ will converge rapidly in numerical experiments.

{\section{Numerical experiments}\label{sec:num}
In this section, we numerically investigate the efficiency and robustness of the Gauss-Newton method by solving the model problem \eqref{minEnergy} in different dimensions. All experiments in the following were run on a single NVIDIA TESLA P100 GPU with double precision.  All the code in this section can be found on GitHub: \url{https://github.com/Jinxl-pp/GaussNewtonDRM}.

\begin{example}\label{example:1}
A second-order elliptic equation in one-dimensional space is given as follows:
\begin{align*}
    -&u^{\prime \prime}(x) + u(x) = f(x),  ~x \in (-1,1),\\
        &u^{\prime}(-1) = u^{\prime}(1) = 0.
\end{align*}
\end{example}
The corresponding variational problem becomes $$ \min_{v\in H^1}\mathcal{J}(v) = \int_{-1}^{1} \left(\frac{1}{2}(v^{\prime})^2 + \frac{1}{2}v^2 - fv \right) \mathrm{d}x.$$ By choosing $f(x)=(\pi^2+1)\cos(\pi x)$, we have  $u(x) = \cos(\pi x)$. We employ shallow neural networks represented as $u_n$ with a ReLU$^3$ activation function. Then, for the discretized energy denoted as $\mathcal{J}(u_n)$, we utilize a piecewise Gauss-Legendre quadrature rule with 12,000 sample points. This rule is based on the 2-point Gauss-Legendre rule and is distributed across 6,000 uniform sub-intervals within the range of $[-1,1]$. These sample points serve as the training data. As for the testing dataset, we extract 16,000 sample points from the same quadrature rule across 8,000 uniform sub-intervals. These testing points are used for calculating the numerical errors.

Now we proceed to compare the Gauss-Newton method with other training techniques, such as SGD, ADAM, and L-BFGS, across various neural network architectures. The adaptive learning rate is implemented through a back-tracking strategy. This adjustment is crucial as the Gauss-Newton method exhibits local convergence, necessitating global optimization techniques for the warming-up stage and achieving global convergence.

During training with Newton-type algorithms (L-BFGS, Gauss-Newton), we conduct 1000 epochs, while for gradient-based algorithms (SGD and ADAM), we extend the training to 20,000 epochs. Figure~\ref{fig:loss_1d} illustrates the detailed dynamics of the training loss in both $L^2$ and $H^1$. The shaded areas represent the range between the first and third quartiles of the 10 experiments, while the lines depict the median values. Since $\mathcal{J}(v) - \mathcal{J}(u) = \frac{1}{2}\|v-u\|^2_{H^1}$, greater accuracy in the $H^1$ norm signifies better convergence of the loss function.

Table \ref{tab:error_1d_compare} presents the best testing error from 10 experiments with independently initialized network parameters, while the architecture remains fixed in each column. It highlights that the accuracy of the Gauss-Newton method is at least two orders of magnitude higher than that of gradient-based training algorithms and one order higher than L-BFGS.
\begin{table}[H]
\centering
\begin{tabular}{cc|c|c|c|c|c}
\multicolumn{2}{c|}{Hidden Layer Width}                          & 16      & 32      & 64      & 128     & 256     \\ \hline
\multicolumn{1}{c|}{\multirow{2}{*}{SGD}}          & $L^2$-error & 7.04e-3 & 4.49e-3 & 3.53e-3 & 3.95e-3 & 3.98e-3 \\
\multicolumn{1}{c|}{}                              & $H^1$-error & 6.98e-2 & 5.45e-2 & 4.26e-2 & 4.57e-2 & 4.73e-2 \\ \hline
\multicolumn{1}{c|}{\multirow{2}{*}{ADAM}}         & $L^2$-error & 1.89e-3 & 1.22e-3 & 2.24e-4 & 2.03e-4 & 1.37e-4 \\
\multicolumn{1}{c|}{}                              & $H^1$-error & 2.76e-2 & 1.97e-2 & 6.07e-3 & 4.76e-3 & 4.01e-3 \\ \hline
\multicolumn{1}{c|}{\multirow{2}{*}{L-BFGS}}       & $L^2$-error & 2.81e-4 & 1.24e-4 & 4.19e-5 & 2.18e-5 & 2.36e-5 \\
\multicolumn{1}{c|}{}                              & $H^1$-error & 6.96e-3 & 3.55e-3 & 1.53e-3 & 8.19e-4 & 9.06e-4 \\ \hline
\multicolumn{1}{c|}{\multirow{2}{*}{Gauss-Newton}} & $L^2$-error & \textbf{7.86e-5} & \textbf{3.45e-5} & \textbf{5.83e-6} & \textbf{2.71e-6} & \textbf{3.78e-7} \\
\multicolumn{1}{c|}{}                              & $H^1$-error & \textbf{2.43e-3} & \textbf{1.30e-3} & \textbf{3.71e-4} & \textbf{2.05e-4} & \textbf{4.50e-5} \\ \hline
\end{tabular}
\caption{Errors in both $L^2$ and $H^1$ norms for various training algorithms with different neural network architectures.}
\label{tab:error_1d_compare}
\end{table}

Additionally, we recorded the total training time required to achieve different tolerances in $L^2$-error for different methods in Table~\ref{tab:error_1d_time}, utilizing a hidden layer with 64 nodes. The results demonstrate that the Gauss-Newton method is more efficient, particularly when higher-order accuracy is essential.

\begin{table}[H]
\centering
\begin{tabular}{c|c|c|c|c}
Stopping Criterion          & SGD     & ADAM    & L-BFGS & Gauss-Newton \\ \hline
${L^2}\text{-error} < 1e-2$ & \textbf{2.15s}   & 3.15s   & 3.70s  & 17.25s                            \\
${L^2}\text{-error} < 1e-3$ & -       & 39.76s  & \textbf{7.54s}  & 18.20s                            \\
${L^2}\text{-error} < 1e-4$ & -       & -       & 83.14s & \textbf{58.07s }                           \\
${L^2}\text{-error} < 1e-5$ & -       & -       & -      & \textbf{132.72s}                           \\ \hline
Average Iteration Time:     & \textbf{0.0029s} & 0.0034s & 0.72s  & 0.17s  \\ \hline                          
\end{tabular}
\caption{Training time for different training algorithms with a hidden layer width of 64. The average iteration time is computed by taking the average of 1000 iterations for each training algorithm.}
\label{tab:error_1d_time}
\end{table}

\begin{figure}[H]
	\centering
	\subfigure[$L^2$-error of Newton-type algorithms]
	{ \includegraphics[width=0.45\textwidth]{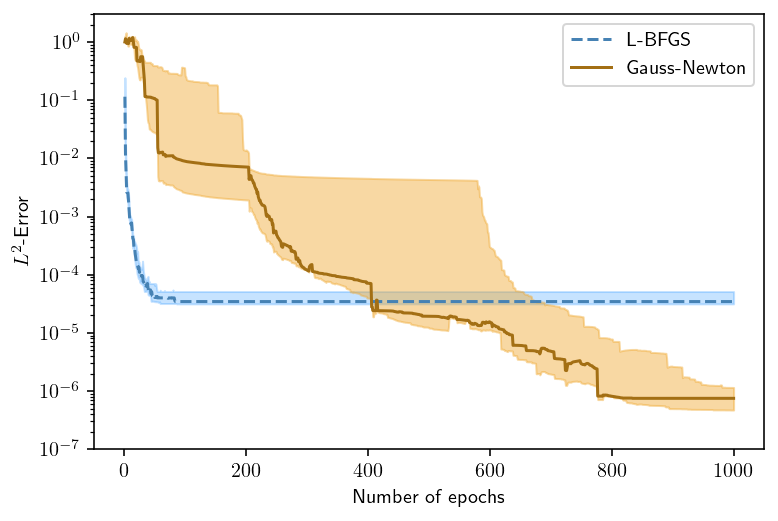}}
		\subfigure[$L^2$-error of gradient-based algorithms]
	{ \includegraphics[width=0.45\textwidth]{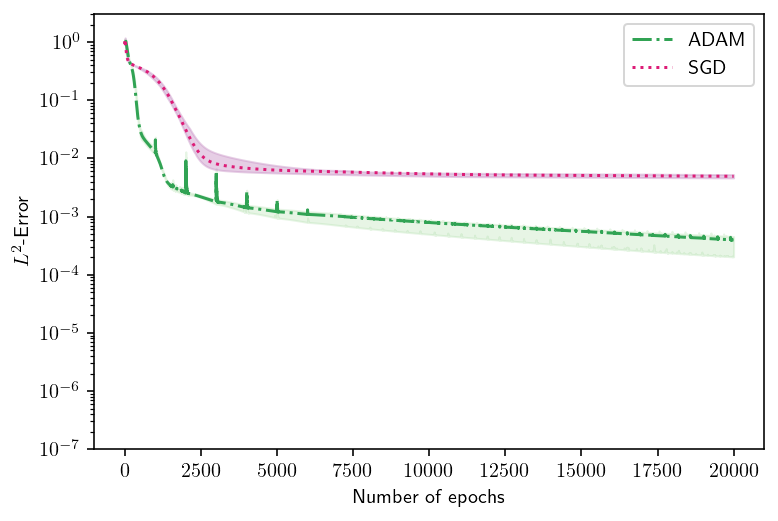}}\\
        \subfigure[$H^1$-error of Newton-type algorithms]
	{ \includegraphics[width=0.45\textwidth]{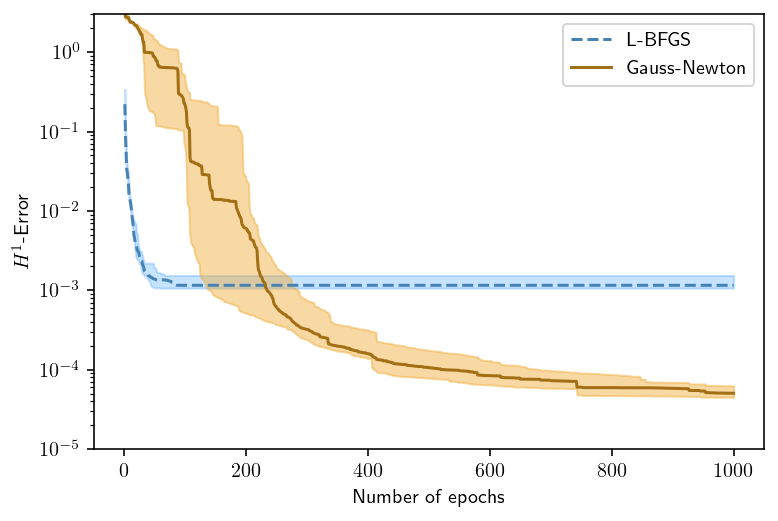}}
        \subfigure[$H^1$-error of gradient-based algorithms]
	{ \includegraphics[width=0.45\textwidth]{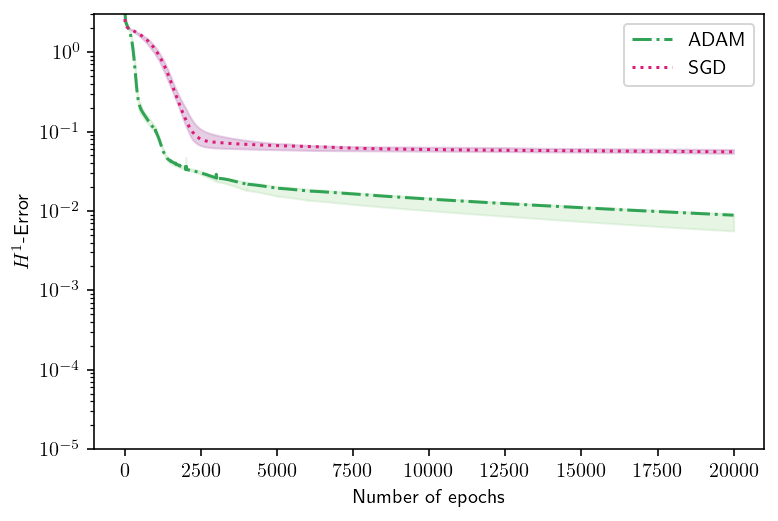}}
	\caption{Testing Errors vs. Iterations. \textbf{Left:} Newton-type training algorithms. The Gauss-Newton method is employed with a back-tracking strategy to determine the learning rate, while L-BFGS is applied with the strong Wolfe condition. \textbf{Right:} Gradient-based training algorithms are applied with an initial learning rate of $1 \times 10^{-3}$, which is then halved every 1000 epochs until it reaches $1 \times 10^{-5}$.
 }\label{fig:loss_1d}
\end{figure}

\begin{example}\label{example:2}
    We consider the following elliptic equation in two-dimensional space:
    \begin{align*}
        -&\Delta u(x,y)+u(x,y) = f(x,y), ~(x,y)\in \Omega = (0,1)^2,\\
         &\frac{\partial u(x,y)}{\partial n} = 0, ~(x,y) \in \partial (0,1)^2.
    \end{align*}
\end{example}
The corresponding variational problem for the 2D equation is given as 
\begin{equation*}
    \min_{v\in H^1} \mathcal{J}(v) = \int_{\Omega} \left(\dfrac{1}{2} \nabla v \cdot \nabla v + \dfrac{1}{2}v^2 - fv \right)\mathrm{d}x\mathrm{d}y
\end{equation*}
By choosing an appropriate source term, we have an exact solution $u(x,y) = \cos(2\pi x) \cos(2\pi y)$. In this example, we employ deep neural networks represented as $u_n$ with ReLU$^3$ activation function and an architecture of 2-20-20-1, which contains two hidden layers with 20 neurons in each. Therefore, the total number of parameters in this architecture is 501. Similarly, we discretize the energy $\mathcal{J}(u_n)$ using a piecewise Gauss-Legendre rule. We take 160000 training samples based on the tensored 2-point Gauss-Legendre rule across $200\times 200$ uniform sub-rectangles in $[0,1]^2$. For the testing dataset used for calculating the numerical errors, we extract 360000 samples using the same quadrature rule across $300 \times 300$ uniform sub-rectangles. The comparison of the four different training algorithms is listed in Table~\ref{tab:error_2d_compare}, which shows the best approximation results of 10 independent experiments with different neural network initializations. For the training with Newton-type algorithms (L-BFGS, Gauss-Newton) we conduct 2500 epochs, and for the gradient-based algorithms (SGD, ADAM) we extend the training to 20000 epochs. Further, the training details of the 10 experiments are plotted in Figure~\ref{fig:loss_2d}, which illustrates the training dynamics in both  $L^2$ and $H^1$ norms. The shaded area represents the range between the first and third quartiles of these experiments, and lines  depict the median values. In both Table~\ref{tab:error_2d_compare} and Figure~\ref{fig:loss_2d} it shows that the accuracy of the Gauss-Newton method is at least two orders of magnitude higher than the gradient-based algorithms and one order higher than L-BFGS. In addition, it also shows the robustness of the Gauss-Newton algorithm under different dimensions while other training algorithms appear simultaneously more oscillations in the training process when compared with their one-dimensional training dynamics in Figure~\ref{fig:loss_1d}.

\begin{table}[H]
\centering
\begin{tabular}{c|c|c|c|c}
Numerical Error   & SGD     & ADAM    & L-BFGS & \multicolumn{1}{c|}{Gauss-Newton} \\ \hline
${L^2}\text{-error}$ & 1.13e-2 & 1.73e-3 & 4.77e-4 & \textbf{2.88e-5}                           \\
${H^1}\text{-error}$ & 2.02e-1 & 5.25e-2 & 1.77e-2 & \textbf{1.57e-3}                          \\ \hline
\end{tabular}
\caption{Errors in both $L^2$ and $H^1$ norms for various training algorithms with a deep architecture 2-20-20-1.}\label{tab:error_2d_compare}
\end{table}


\begin{figure}[H]
	\centering
	\subfigure[$L^2$-error of Newton-type algorithms]
	{ \includegraphics[width=0.45\textwidth]{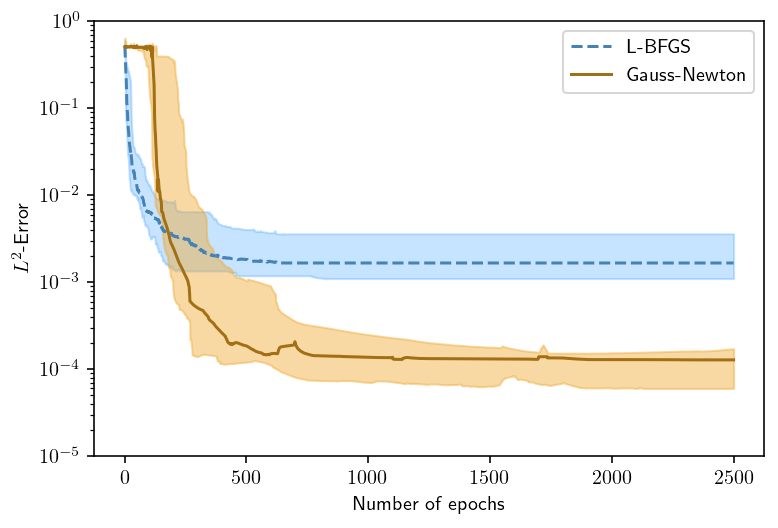}}
		\subfigure[$L^2$-error of gradient-based algorithms]
	{ \includegraphics[width=0.45\textwidth]{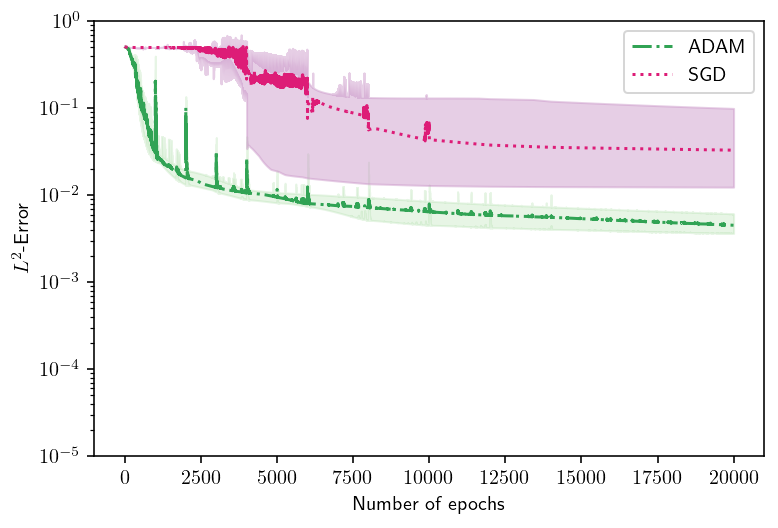}}\\
        \subfigure[$H^1$-error of Newton-type algorithms]
	{ \includegraphics[width=0.45\textwidth]{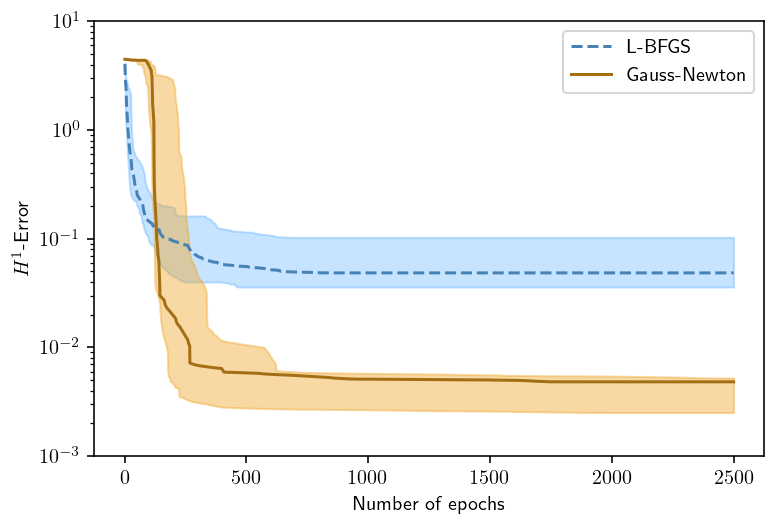}}
        \subfigure[$H^1$-error of gradient-based algorithms]
	{ \includegraphics[width=0.45\textwidth]{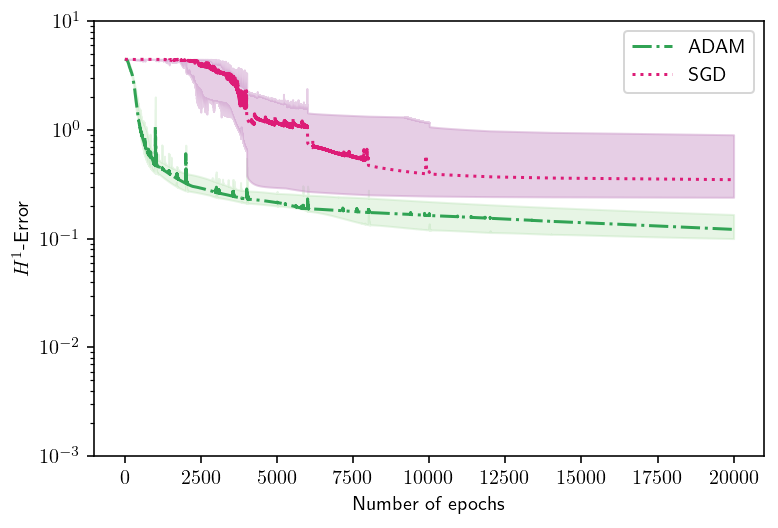}}
	\caption{Testing Errors vs. Iterations. \textbf{Left:} Newton-type training algorithms. The Gauss-Newton method is employed with a back-tracking strategy to determine the learning rate, while L-BFGS is applied with the strong Wolfe condition. \textbf{Right:} Gradient-based algorithms. ADAM is applied with an initial learning rate of $1 \times 10^{-3}$, which is then halved every 2000 epochs until it reaches $1 \times 10^{-5}$. SGD is applied with an initial learning rate of $1\times 10^{-2}$ which is halved every 2000 epochs before going below $1\times 10^{-4}$.
 }\label{fig:loss_2d}
\end{figure}

\begin{example}[High dimensional example]
    Let us denote $x:=(x_1,x_2,\cdots,x_5)$. A second-order elliptic equation in five-dimensional space is considered as follows:
    \begin{align*}
    -&\Delta u(x) + \pi^2 u(x) = 2\pi^2\sum_{k=1}^{5}\cos(\pi x_k),  ~x \in (0,1)^5,\\
        &\dfrac{\partial u(x)}{\partial n} = 0, ~x \in \partial (0,1)^5.
    \end{align*}
    Then the corresponding variational problem becomes 
    \begin{equation*}
        \min_{v\in H^1} \mathcal{J}(v) = \int_{\Omega}\left( \dfrac{1}{2}\nabla v \cdot \nabla v + \dfrac{\pi^2}{2} v^2 - 2\pi^2 v \sum_{k=1}^{5}\cos(\pi x_k) \right)\mathrm{d}x.
    \end{equation*}
\end{example}

The exact solution of the equation is given by $u(x) = \sum_{k=1}^{5}\cos(\pi x_k)$. In this example we employ a shallow neural network $u_n$,  with 64 neurons in the hidden layer and with ReLU$^4$ as its activation function, to solve the equation. The total number of parameters in this architecture is 448. In order to obtain a relatively good accuracy when computing integrals in five-dimensional spaces, we generate 16000 training samples and 20000 testing samples in $[0,1]^5$ using the quasi-Monte-Carlo method associated with the Halton sequence. The training samples will only be used for optimization and the testing samples will be used for computing the relative numerical errors in both $L^2$ and $H^1$ norms. Next, we give a comparison between the Gauss-Newton method and other popular training methods. The best approximation results of 10 independent experiments are listed in Table~\eqref{tab:error_5d_compare}. We conduct 5000 epochs when training the Newton-type algorithms (L-BFGS, Gauss-Newton) and for the gradient-based method (SGD, ADAM) the training epochs are extended to 20000. 

\begin{table}[H]
\centering
\begin{tabular}{c|c|c|c|c}
Numerical Error   & SGD     & ADAM    & L-BFGS & \multicolumn{1}{c|}{Gauss-Newton} \\ \hline
Relative ${L^2}\text{-error}$ & 1.74e-1 & 5.41e-3 & 5.73e-3 & \textbf{5.09e-3}                           \\
Relative ${H^1}\text{-error}$ & 3.42e-1 & 2.08e-2 & 2.16e-2 & \textbf{1.44e-2}                          \\ \hline
\end{tabular}
\caption{Errors in both $L^2$ and $H^1$ norms for various training algorithms with a shallow neural network of 64 neurons.}\label{tab:error_5d_compare}
\end{table}

Moreover, the training details of these 10 experiments are shown in Figure~\ref{fig:loss_5d} below, which gives the training dynamics in the sense of both $L^2$ and $H^1$ norm. Again, the line represents the median values of the numerical relative errors and the shaded area that covers the line represents the range between the first and the third quartiles. Due to the limit of the computational resource and the insufficient accuracy of using the quasi-Monte-Carlo method instead of another high-resolution quadrature rule, all four training algorithms showed less approximation accuracy than their results in lower-dimensional cases like Example~\ref{example:1} and Example~\ref{example:2}. However, we can still observe a better performance of the Gauss-Newton method than the other three training algorithms. The Gauss-Newton method demonstrates faster convergence and greater stability when compared to the Adam method, which exhibits numerous oscillations during training.

\begin{figure}[H]
	\centering
	\subfigure[Relative $L^2$-error of Newton-type algorithms]
    { \includegraphics[width=0.45\textwidth]{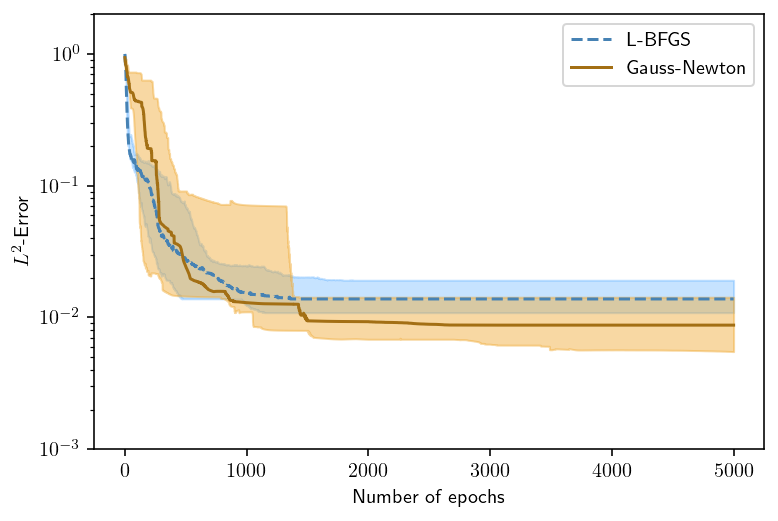}}
		\subfigure[Relative $L^2$-error of gradient-based algorithms]
	{ \includegraphics[width=0.45\textwidth]{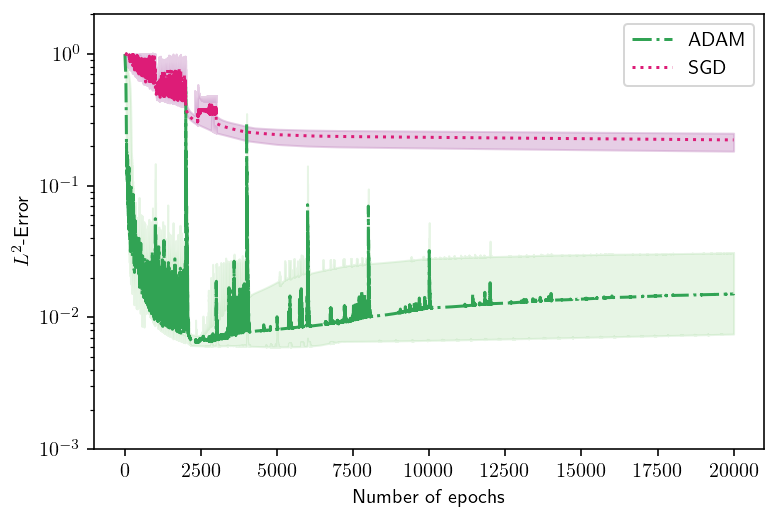}}\\
        \subfigure[Relative $H^1$-error of Newton-type algorithms]
	{ \includegraphics[width=0.45\textwidth]{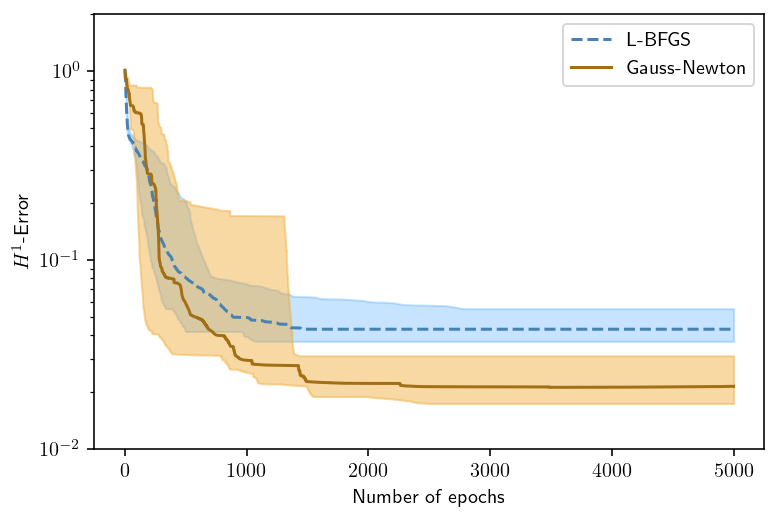}}
        \subfigure[Relative $H^1$-error of gradient-based algorithms]
	{ \includegraphics[width=0.45\textwidth]{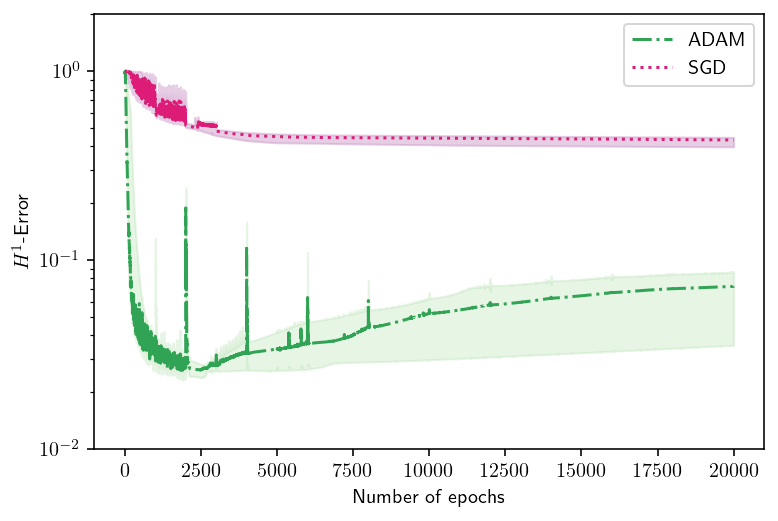}}
	\caption{Testing Relative Errors vs. Iterations. \textbf{Left:} Newton-type training algorithms. The Gauss-Newton method is employed with a back-tracking strategy to determine the learning rate, while L-BFGS is applied with the strong Wolfe condition. \textbf{Right:} Gradient-based algorithms. Both ADAM and SGD are applied with an initial learning rate of $1 \times 10^{-3}$, which is then halved every 2000 epochs until it reaches $1 \times 10^{-5}$.
 }\label{fig:loss_5d}
\end{figure}

}

\section{Conclusions}\label{con}
In this paper, the Gauss-Newton method has been introduced as a powerful approach for solving partial differential equations (PDEs) using neural network discretization in the variational energy formulation. This method offers significant advantages in terms of convergence and computational efficiency. The comprehensive analysis conducted in this paper has demonstrated the superlinear convergence properties of the Gauss-Newton method, positioning it as another choice for numerical solutions of PDEs. The method converges to semi-regular zeros of the vanishing gradient, indicating its effectiveness in reaching optimal solutions.

Furthermore, we provide the conditions under which the Gauss-Newton method is identical for both variational and L2 minimization problems. Additionally, a variant of the method known as the random Gauss-Newton method has been analyzed, highlighting its potential for large-scale systems.
The numerical examples presented in this study reinforce the efficiency and accuracy of the proposed Gauss-Newton method.

In summary, the Gauss-Newton method introduces a novel framework for solving PDEs through neural network discretization. Its convergence properties and computational efficiency make it a valuable tool in the field of computational mathematics. Future research directions may involve incorporating fast linear solvers into the Gauss-Newton method to further enhance its efficiency. This advancement would contribute to the broader utilization of the method and accelerate its application in various domains.

{\bf Data availability} Data sharing is not applicable to this article as no datasets were generated or analyzed during
the current study.

{\bf Declarations}
 WH and QH are supported by NIH via 1R35GM146894.  There is no conflict of interest.

{\bf Acknowledgment}
We thank Dr. Yiran Wang from Purdue University for valuable discussions and contributing efforts to the first version of numerical experiments.

\bibliographystyle{amsplain}
\bibliography{GaussNewton}


\newpage
\section*{Appendix}
In this section, we will introduce some preliminaries for the convergence analysis of Gauss-Newton's method. It is worth noting that the following results do not specify the form of activation functions.

To begin, let us consider the minimization problem (\ref{minEnergy}). We seek to find a minimizer $v$ of the functional $\mathcal{J}(w) = a(w,w) - (f,w)$ over a space $V$ of admissible functions. Here, $a(w,z)$ is a symmetric bilinear form defined as $\displaystyle a(w,z) = \int_{\Omega} \left(\dfrac{a}{2} \nabla w \cdot \nabla z + \dfrac{c}{2} wz\right) dx$.

\begin{lemma}\label{lemma1}
Suppose $v$ is the minimizer of (\ref{minEnergy}) or the solution of (\ref{2ndPDE}), and define $\Vert u \Vert_a^2 = a(u,u)$. Then, we have \begin{align}
\mathcal{J}(u) - \mathcal{J}(v) = \Vert u-v \Vert_a^2,~ \forall u\in V.
\end{align}
\end{lemma}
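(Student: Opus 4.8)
The plan is to reduce the identity to the first-order optimality (Euler--Lagrange) condition satisfied by the minimizer $v$, and then expand $\mathcal{J}$ around $v$ using only the symmetry and bilinearity of $a(\cdot,\cdot)$.

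First I would record the variational characterization of $v$. Since $v$ minimizes $\mathcal{J}(w) = a(w,w) - (f,w)$ over the linear space $V$, the Gateaux derivative of $\mathcal{J}$ at $v$ must vanish in every admissible direction $\phi \in V$. Expanding $\mathcal{J}(v+\epsilon\phi)$ and using the symmetry $a(v,\phi) = a(\phi,v)$, the coefficient of $\epsilon$ yields the weak form $2a(v,\phi) = (f,\phi)$ for all $\phi \in V$; this is exactly the weak formulation of (\ref{2ndPDE}) with $a=c=1$, so the two characterizations of $v$ appearing in the hypothesis coincide.

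Next I would set $e = u - v$, which lies in $V$ because $V$ is a linear space, and expand. Writing $u = v+e$ and using bilinearity and symmetry,
\[
a(u,u) = a(v,v) + 2a(v,e) + a(e,e), \qquad (f,u) = (f,v) + (f,e),
\]
so that, subtracting $\mathcal{J}(v) = a(v,v) - (f,v)$,
\[
\mathcal{J}(u) - \mathcal{J}(v) = 2a(v,e) + a(e,e) - (f,e).
\]
I would then apply the optimality condition with test function $\phi = e$, namely $2a(v,e) = (f,e)$. Substituting this into the previous display cancels the first and last terms, leaving $\mathcal{J}(u) - \mathcal{J}(v) = a(e,e) = \Vert u-v\Vert_a^2$, which is the claim.

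There is no serious obstacle here; the only point requiring care is bookkeeping the factor of $2$ that appears because the factor $\tfrac12$ has been absorbed into the definition of $a(\cdot,\cdot)$. Consequently the optimality condition reads $2a(v,\phi) = (f,\phi)$ rather than $a(v,\phi) = (f,\phi)$, and it is precisely this factor that makes the cross term $2a(v,e)$ cancel the linear term $(f,e)$ exactly, producing the full squared norm $\Vert u-v\Vert_a^2$ rather than half of it.
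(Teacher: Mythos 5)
Your proof is correct and follows essentially the same route as the paper's: both rest on the optimality condition $2a(v,\phi) = (f,\phi)$ for all $\phi \in V$ (with the factor of $2$ arising from the $\tfrac12$ absorbed into $a(\cdot,\cdot)$), combined with a bilinear expansion of $\mathcal{J}(u)-\mathcal{J}(v)$. The only cosmetic difference is that you test the optimality condition once with $\phi = u-v$ after expanding around $v$, whereas the paper substitutes it twice with test functions $u$ and $v$ separately; the algebra is identical.
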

\begin{proof}
For any $u\in V$, we have the optimality condition for the minimizer $v$:
\begin{align}
0 = \delta \mathcal{J}(v)[u] = 2a(v,u) - (f,u), \quad \forall u \in V.
\end{align}

By a simple computation, we can show that
\begin{align*}
\mathcal{J}(u) - \mathcal{J}(v) &= a(u,u) - (f,u) - a(v,v) + (f,v)\\
& = a(u,u) - (f,u) + a(v,v) \\
& = a(u,u) - 2a(v,u) + a(v,v) \\
& = \Vert u-v \Vert_a^2.
\end{align*}
\end{proof}

The energy minimization problem (\ref{minEnergy}) requires an admissible set with at least $H^1$ regularity, but different training algorithms may require more regularity. Therefore, the choice of the activation function in DNN models is crucial in ensuring that the admissible set has the required regularity. The $\text{ReLU}^k$ activation function with $k\geq 2$ is a suitable choice as it guarantees $H^k$ regularity, which is necessary for the gradient method to work. The $\text{ReLU}^k$-DNN with one hidden layer, denoted by $V_N^k = \lbrace \sum_{i=1}^{N} a_{i}\text{ReLU}^k\left(w_{i} x+b_{i}\right), a_{i}, b_{i} \in \mathbb{R}^{1}, w_{i} \in \mathbb{R}^{1 \times d} \rbrace$, then we have $V_N^k(\Omega) \subset H^k(\Omega), k\geq 1$.

By considering a general $J$ hidden layer $\text{ReLU}^k$-DNN, we denote the admissible set on $\Omega$ for the energy functional as $DNN_J$. To ensure that the DNN model we use is at least a $C^0$ function, we assume that $DNN_J \subset V = H^1$. We have an approximation result for $DNN_J$:

\begin{lemma}\label{lemma2}
For any given $\epsilon > 0$, there exist $J\in \mathbb{N}^{+}$ and $m\in \mathbb{N}^{+}$ such that $u(\cdot,\theta) \in DNN_{J}$, $\theta\in \mathbb{R}^m$, and $\theta^* = \arg\min_{\theta} \mathcal{J}(u(x,\theta))$ satisfies
\begin{align}
\Vert u(\cdot,\theta^*) - v \Vert_{L^2(\Omega)} & \lesssim \epsilon,\
\big | u(\cdot,\theta^*) - v \big |_{H^1(\Omega)}  \lesssim \epsilon.
\end{align}
\end{lemma}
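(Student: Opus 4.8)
The plan is to combine the energy identity of Lemma~\ref{lemma1} (a Ritz quasi-optimality principle) with the known approximation power of $\text{ReLU}^k$-DNNs. First I would observe that, since $\theta^* = \arg\min_{\theta} \mathcal{J}(u(x,\theta))$, the resulting function $u^* := u(\cdot,\theta^*)$ is in fact the best approximation of $v$ in the energy norm over the entire discrete set $DNN_J$. Indeed, applying Lemma~\ref{lemma1} to every $u \in DNN_J \subset V$ gives $\mathcal{J}(u) - \mathcal{J}(v) = \Vert u - v \Vert_a^2$; since $\mathcal{J}(v)$ is a fixed constant, minimizing $\mathcal{J}$ over the parameters is equivalent to minimizing $\Vert u-v\Vert_a$ over $DNN_J$, whence
\begin{align*}
\Vert u^* - v \Vert_a = \min_{u \in DNN_J} \Vert u - v \Vert_a .
\end{align*}

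Second, I would record the equivalence between the energy norm and the $H^1$ norm. With the normalization $a=c=1$ one has $\Vert u \Vert_a^2 = \tfrac12\Vert \nabla u\Vert_{L^2}^2 + \tfrac12\Vert u\Vert_{L^2}^2 = \tfrac12 \Vert u\Vert_{H^1(\Omega)}^2$, and in the general setting the uniform ellipticity assumptions (\ref{Assumption1})--(\ref{Assumption2}) furnish positive constants $c_1,c_2$ with $c_1\Vert u\Vert_{H^1}^2 \le \Vert u\Vert_a^2 \le c_2\Vert u\Vert_{H^1}^2$. Consequently both the $L^2$ error and the $H^1$ seminorm of $u^*-v$ are controlled by $\Vert u^*-v\Vert_a$.

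Third, I would invoke the neural-network approximation theorems of \cite{barron1993universal, siegel2020approximation, siegel2022sharp}: for the target $v \in H^1(\Omega)$ (in the appropriate Barron/spectral class) and any $\epsilon>0$, there exist a depth $J$, a width $m$, and parameters $\tilde\theta \in \mathbb{R}^m$ such that $\tilde u := u(\cdot,\tilde\theta) \in DNN_J$ obeys $\Vert \tilde u - v\Vert_{H^1(\Omega)} \lesssim \epsilon$. Chaining the three facts then yields
\begin{align*}
\Vert u^* - v\Vert_a = \min_{u\in DNN_J}\Vert u - v\Vert_a \le \Vert \tilde u - v\Vert_a \le \sqrt{c_2}\,\Vert \tilde u - v\Vert_{H^1} \lesssim \epsilon,
\end{align*}
and the norm equivalence returns both $\Vert u^*-v\Vert_{L^2(\Omega)} \lesssim \epsilon$ and $\vert u^*-v\vert_{H^1(\Omega)}\lesssim\epsilon$.

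The main obstacle I anticipate is the third step: securing an approximation bound in the $H^1$ norm rather than merely in $L^2$ or $L^\infty$, since this requires controlling the gradient of the approximant and thus imposes a genuine regularity hypothesis on $v$ (finite spectral/Barron norm), which is where the cited dimension-independent rates enter. A secondary technical point is the attainment of the exact minimizer $\theta^*$: because $DNN_J$ need not be closed, I would either restrict to a compact sublevel set of the coercive energy $\mathcal{J}$, or—more safely—replace $\theta^*$ by a near-minimizer whose energy lies within $\epsilon^2$ of the infimum, which is already enough to deliver the stated $\lesssim\epsilon$ estimates.
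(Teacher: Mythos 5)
Your proposal follows essentially the same route as the paper's own proof: both use Lemma~\ref{lemma1} to turn minimality of $\theta^*$ into a best-approximation property in the energy norm (the paper writes this as $\mathcal{J}(u(\cdot,\theta)) \geq \mathcal{J}(u(\cdot,\theta^*)) \geq \mathcal{J}(v)$), then produce an $H^1$-accurate approximant in $DNN_J$, and conclude via the equivalence of the energy norm with the $L^2$ norm and $H^1$ seminorm. The only minor differences are that the paper obtains the approximant from the Weierstrass/Stone density theorem in $W^1_p$ (using that $\text{ReLU}^k$-DNNs are piecewise polynomials) rather than the Barron/Siegel approximation rates you cite, and your closing remark about replacing $\theta^*$ by a near-minimizer addresses an attainment issue that the paper leaves implicit.
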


We will use the notation ``$A \lesssim B$" to denote $A \leq C B$ for some constant $C$  independent of crucial parameters such as $\theta$.

\begin{proof}
By the Weierstrass theorem in Sobolev space $W^1_p$ {\bf \cite{de1959stone}} (for $1\leq p \leq \infty$) and the fact that $\text{ReLU}^k$-DNN functions are piecewise polynomials that belong to $H^1$, there exist $J\in \mathbb{N}^{+}$, $m\in \mathbb{N}^{+}$ and $\theta\in \mathbb{R}^m$ such that $u(\cdot,\theta)\in DNN_{J}$ and
\begin{align}
\Vert u(\cdot,\theta) - v \Vert_a \lesssim \Vert u(\cdot,\theta)  - v \Vert_{H^1(\Omega)} < \epsilon.
\end{align}
In particular, when considering $\text{ReLU}^1$-DNN functions, one can refer to \cite{he2020relu} for estimations of a sufficiently large depth $J$ and a sufficiently large number of neurons required to represent all the piecewise linear functions in $\mathbb{R}^d$. 

Next, as $DNN_{J} \subset V$, we have
\begin{align}
\mathcal{J}(u(\cdot,\theta)) \geq \mathcal{J}(u(\cdot,\theta^)) \geq \mathcal{J}(v),
\end{align}
which, together with Lemma~\ref{lemma1}, implies that
{
\begin{align*}
\Vert u(\cdot,\theta) - v \Vert_{L^2(\Omega)} \lesssim \Vert u(\cdot,\theta) -v \Vert_a \leq \Vert u(\cdot,\theta) -v \Vert_a &< \epsilon \\
\big | u(\cdot,\theta) - v \big |_{H^1(\Omega)} \lesssim \Vert u(\cdot,\theta^*) -v \Vert_a \leq \Vert u(\cdot,\theta) -v \Vert_a &< \epsilon.
\end{align*}
}
\end{proof} 

Based on the lemmas above, we can estimate the residual function as follows:
\begin{lemma}\label{lemma3}
For $\forall \epsilon > 0$, there exist $\delta > 0$, $J\in \mathbb{N}^{+}$ and $m\in \mathbb{N}^{+}$, 
such that $\theta\in \mathbb{R}^m$, $u(\cdot,\theta) \in DNN_J\subset H^1(\Omega)$, and $
\Vert \theta-\theta^* \Vert < \delta$, it holds 
\begin{align}\label{lemma3_1}
\Vert u(\cdot,\theta) -v \Vert_{L^2(\Omega)} <\epsilon 
\end{align}
and
\begin{align}\label{lemma3_2}
\int_{\Omega} \left(\nabla u(x,\theta) \cdot \nabla w(x) + u(x,\theta) w(x) - f(x) w(x) \right) dx < C \epsilon,\quad \forall w \in H^1(\Omega)
\end{align}
where $C >0$ is independent of $\theta$.
\end{lemma}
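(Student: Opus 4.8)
The plan is to bootstrap from the energy minimizer $\theta^*$, where Lemma~\ref{lemma2} already supplies the necessary estimates, out to the whole $\delta$-ball $\{\|\theta-\theta^*\|<\delta\}$ by exploiting continuity of the parameter-to-function map, and then to recognize the integral in \eqref{lemma3_2} as a weak residual of the bilinear form.

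First I would invoke Lemma~\ref{lemma2} to fix an architecture $DNN_J$ (with suitable $J,m$) whose minimizer $\theta^*$ satisfies $\|u(\cdot,\theta^*)-v\|_{L^2(\Omega)}\lesssim\epsilon$ and $|u(\cdot,\theta^*)-v|_{H^1(\Omega)}\lesssim\epsilon$, hence $\|u(\cdot,\theta^*)-v\|_{H^1(\Omega)}\lesssim\epsilon$. Next, since for $\mathrm{ReLU}^k$-DNNs the map $\theta\mapsto u(\cdot,\theta)$ is smooth, and in particular locally Lipschitz, as a map into $H^1(\Omega)$, there is a constant $L$ and a radius $\delta$ with $\|u(\cdot,\theta)-u(\cdot,\theta^*)\|_{H^1(\Omega)}\le L\|\theta-\theta^*\|$ whenever $\|\theta-\theta^*\|<\delta$. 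Choosing $\delta$ so that $L\delta<\epsilon$ and applying the triangle inequality gives $\|u(\cdot,\theta)-v\|_{H^1(\Omega)}\lesssim\epsilon$ throughout the ball; since the $L^2$ norm is dominated by the $H^1$ norm, this immediately yields \eqref{lemma3_1}.

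For \eqref{lemma3_2} I would identify the integrand as the weak residual associated with the bilinear form $a(\cdot,\cdot)$. Using the optimality condition for $v$ derived in the proof of Lemma~\ref{lemma1}, namely $2a(v,w)-(f,w)=0$ for every $w\in H^1(\Omega)$, the left-hand side of \eqref{lemma3_2} equals $2a\bigl(u(\cdot,\theta)-v,\,w\bigr)$. Continuity (boundedness) of the bilinear form on $H^1(\Omega)$ then bounds this by $C\,\|u(\cdot,\theta)-v\|_{H^1(\Omega)}\,\|w\|_{H^1(\Omega)}$, and substituting the $H^1$ estimate from the previous step completes the argument.

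The main obstacle is the continuity step: making the local Lipschitz dependence of $\theta\mapsto u(\cdot,\theta)$ in the $H^1$ norm precise, since one must control $u$ and $\nabla u$ simultaneously over $\Omega$ in terms of the parameters. This is exactly where the $k\ge 2$ regularity of the $\mathrm{ReLU}^k$ activation enters, guaranteeing $u(\cdot,\theta)\in H^1(\Omega)$ with parameter-smooth dependence. A secondary point worth flagging is that \eqref{lemma3_2} as written is scale-sensitive in $w$, so the constant $C$ should be understood to absorb $\|w\|_{H^1(\Omega)}$ (equivalently, $w$ is taken of unit norm); the estimate is really a bound on the $H^{-1}$ dual norm of the residual, which is the natural formulation and the one the subsequent analysis uses.
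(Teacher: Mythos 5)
Your proposal is correct and follows essentially the same route as the paper's own proof: both establish \eqref{lemma3_1} by the triangle inequality through $u(\cdot,\theta^*)$, combining Lemma~\ref{lemma2} with continuity of $\theta\mapsto u(\cdot,\theta)$, and both obtain \eqref{lemma3_2} by subtracting the weak (optimality) equation satisfied by $v$ and bounding the resulting pairing $2a\bigl(u(\cdot,\theta)-v,\,w\bigr)$ via boundedness of the bilinear form on $H^1(\Omega)$. Your closing remark that the constant $C$ must absorb $\Vert w \Vert_{H^1(\Omega)}$ (so the estimate is really an $H^{-1}$-type residual bound) is consistent with the paper's proof, whose final inequality likewise carries the factors $\vert w \vert_{H^1(\Omega)}$ and $\Vert w \Vert_{L^2(\Omega)}$.
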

\begin{proof}
By the continuity of $u(\cdot,\theta)$ with respect to $\theta$, it is easy to see from Lemma~\ref{lemma2} that 
\begin{align}\label{lemma3one}
\Vert u(\cdot,\theta) -v \Vert_{L^2(\Omega)} \leq \Vert u(\cdot,\theta)-u(\cdot,\theta^*) \Vert_{L^2(\Omega)} + \Vert u(\cdot,\theta^*) - v \Vert_{L^2(\Omega)} < \epsilon.
\end{align}
Next, we have for $\forall w \in H^1(\Omega)$,
\begin{align}
\left( r(u(\cdot,\theta)), w \right)_{L^2(\Omega)} &= \int_{\Omega} \left(a\nabla u(x,\theta) \cdot \nabla w(x) + c u(x,\theta) w(x) - f(x) w(x) \right) dx\nonumber \\ 
&= \int_{\Omega} \left( a\nabla (u(x,\theta) - v(x)) \cdot \nabla w(x) + c (u(x,\theta) - v(x)) w(x) \right) dx\nonumber\\ 
& \lesssim \big | u(\cdot,\theta) -v \big |_{H^1(\Omega)} \big | w \big |_{H^1(\Omega)} + \Vert u(\cdot,\theta) -v \Vert_{L^2(\Omega)} \Vert w \Vert_{L^2(\Omega)}. \nonumber
\end{align}
Since $w \in H^1(\Omega)$, we can conclude form Lemma \ref{lemma2} that the estimation (\ref{lemma3_2}) is valid.
\end{proof}

Next, we have Wedin's Theorem about a perturbed matrix.

\begin{lemma}\label{lemma5} (Wedin, see \cite{wedin1973perturbation} or \cite{castro2016multiplicative})
Let $A\in \mathbb{R}^{m\times n}$ and $B = A+E$ with $rank(B) = rank(A)$. Then in any unitary invariant norm $\Vert \cdot \Vert$,
\begin{align}\label{lemma5one}
\Vert B^{\dagger} - A^{\dagger} \Vert \leq \mu \Vert A^{\dagger} \Vert_2 \Vert B^{\dagger} \Vert_2 \Vert E \Vert
\end{align}
for some moderate constant $\mu > 0$ that depends on the norm used. Moreover, if
\begin{align}\label{lemma5two}
\Vert E \Vert_2 < \dfrac{1}{\Vert A^{\dagger} \Vert_2},
\end{align} 
then the inverse of $B$ can be bounded by
\begin{align}\label{lemma5three}
\Vert B^{\dagger} \Vert_2 \leq \dfrac{\Vert A^{\dagger} \Vert_2}{1-\Vert A^{\dagger} \Vert_2\Vert E \Vert_2}
\end{align}
\end{lemma}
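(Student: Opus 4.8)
The plan is to prove the two assertions separately, disposing of the singular-value bound \eqref{lemma5three} first, since it is self-contained and also clarifies why $\Vert B^{\dagger}\Vert_2$ stays finite in the rank-preserving regime. Write $r = \operatorname{rank}(A) = \operatorname{rank}(B)$ and recall that for any matrix $M$ of rank $r$ one has $\Vert M^{\dagger}\Vert_2 = 1/\sigma_r(M)$, where $\sigma_r$ is the smallest nonzero singular value. Weyl's perturbation inequality for singular values gives $|\sigma_r(B)-\sigma_r(A)| \leq \Vert E\Vert_2$, so the hypothesis $\Vert E\Vert_2 < 1/\Vert A^{\dagger}\Vert_2 = \sigma_r(A)$ forces $\sigma_r(B) \geq \sigma_r(A) - \Vert E\Vert_2 > 0$. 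Inverting this and substituting $\sigma_r(A) = 1/\Vert A^{\dagger}\Vert_2$ yields \eqref{lemma5three} immediately; this step is routine once Weyl's theorem is invoked.

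For \eqref{lemma5one} I would start from the classical Wedin decomposition of the difference of pseudo-inverses. With $P_A = AA^{\dagger}$ and $R_A = A^{\dagger}A$ the orthogonal projectors onto the ranges of $A$ and $A^{T}$ (and $P_B,R_B$ defined analogously), one has the identity
\begin{align}
B^{\dagger}-A^{\dagger} = -B^{\dagger}EA^{\dagger} + B^{\dagger}(B^{\dagger})^{T}E^{T}(I-P_A) + (I-R_B)E^{T}(A^{\dagger})^{T}A^{\dagger},
\end{align}
which is verified directly from $AA^{\dagger}A=A$ and its companions together with the relations $A^{T}(I-P_A)=0$ and $(I-R_B)B^{T}=0$. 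The first term is bounded at once by $\Vert B^{\dagger}\Vert_2\Vert E\Vert\Vert A^{\dagger}\Vert_2$ using submultiplicativity of a unitarily invariant norm against the spectral norm, so only the two cross terms remain.

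The delicate point is precisely these cross terms, which naively scale like $\Vert B^{\dagger}\Vert_2^{2}\Vert E\Vert$ and $\Vert A^{\dagger}\Vert_2^{2}\Vert E\Vert$. Here the rank-preserving hypothesis is essential. Using $A^{T}(I-P_A)=0$ one simplifies $B^{\dagger}(B^{\dagger})^{T}E^{T}(I-P_A) = B^{\dagger}(I-P_A)$, and by transpose invariance of the norm this is controlled by $\Vert(I-P_A)P_B\Vert_2\Vert B^{\dagger}\Vert$. Because the ranges of $A$ and $B$ have \emph{equal} dimension, the canonical-angle symmetry $\Vert(I-P_A)P_B\Vert_2 = \Vert(I-P_B)P_A\Vert_2$ holds; combining this with the elementary identities $(I-P_A)P_B = (I-P_A)EB^{\dagger}$ and $(I-P_B)P_A = -(I-P_B)EA^{\dagger}$ lets one replace a spurious factor of $\Vert B^{\dagger}\Vert_2$ by $\Vert A^{\dagger}\Vert_2$, collapsing the term to $\Vert A^{\dagger}\Vert_2\Vert B^{\dagger}\Vert_2\Vert E\Vert$. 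The third term is handled identically after passing to $A^{T},B^{T}$ and the row-space projectors $R_A,R_B$. Summing the three contributions gives \eqref{lemma5one} with a moderate constant $\mu$ depending only on the choice of norm.

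I expect the main obstacle to be exactly this treatment of the cross terms: the crude estimate $\Vert B^{\dagger}\Vert_2^{2}\Vert E\Vert$ does not produce the symmetric product $\Vert A^{\dagger}\Vert_2\Vert B^{\dagger}\Vert_2$ claimed in the statement, and without the equal-dimension structure the bound genuinely fails (a rank jump lets $\Vert B^{\dagger}\Vert_2$ blow up relative to $\Vert A^{\dagger}\Vert_2$). Overcoming it requires invoking the acute-perturbation geometry through the canonical-angle symmetry, which is where the hypothesis $\operatorname{rank}(B)=\operatorname{rank}(A)$ does the real work. I would therefore front-load the verification of the projector identities, since everything afterward is bookkeeping with unitarily invariant norms.
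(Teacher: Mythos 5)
The paper itself gives no proof of this lemma---it is quoted from Wedin \cite{wedin1973perturbation} (see also \cite{castro2016multiplicative})---so your proposal can only be measured against the classical argument, which is indeed the route you take: the three-term splitting of $B^{\dagger}-A^{\dagger}$ for \eqref{lemma5one}, and a singular-value perturbation bound for \eqref{lemma5three}. Your proof of \eqref{lemma5three} via Weyl's inequality is correct and complete. Your decomposition identity is also correct: it follows, as you say, from $AA^{\dagger}A=A$, $A^{T}(I-P_A)=0$ and $(I-R_B)B^{T}=0$, and the projector identities $(I-P_A)P_B=(I-P_A)EB^{\dagger}$ and $(I-P_B)P_A=-(I-P_B)EA^{\dagger}$ are exactly the right vehicles for exploiting the equal-rank hypothesis.

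The gap is in the norm bookkeeping for the cross terms, and it matters for precisely the generality the lemma claims (every unitarily invariant norm). You bound the second term by $\Vert B^{\dagger}\Vert\,\Vert(I-P_A)P_B\Vert_2$, i.e.\ you place the unitarily invariant norm on $B^{\dagger}$ and the spectral norm on the projector product; after the angle symmetry this yields $\Vert B^{\dagger}\Vert\,\Vert E\Vert_2\,\Vert A^{\dagger}\Vert_2$, which is \emph{not} the claimed $\mu\,\Vert A^{\dagger}\Vert_2\Vert B^{\dagger}\Vert_2\Vert E\Vert$: for the Frobenius norm with $E$ of rank one (so $\Vert E\Vert_2=\Vert E\Vert_F$) one has $\Vert B^{\dagger}\Vert_F\Vert E\Vert_2\approx\sqrt{r}\,\Vert B^{\dagger}\Vert_2\Vert E\Vert_F$, so your chain only produces a rank-dependent $\mu\sim\sqrt{r}$ rather than the moderate dimension-free constant ($\mu\le 3$) that Wedin's theorem asserts; only for the spectral norm does your ordering cause no loss. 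The repair stays entirely inside your argument: estimate $\Vert B^{\dagger}P_B(I-P_A)\Vert\le\Vert B^{\dagger}\Vert_2\,\Vert P_B(I-P_A)\Vert$, keeping the unitarily invariant norm on the projector factor, and upgrade the angle symmetry from the $2$-norm to all unitarily invariant norms---legitimate because when $\operatorname{rank}(A)=\operatorname{rank}(B)$ the matrices $(I-P_A)P_B$ and $(I-P_B)P_A$ have the same nonzero singular values (the sines of the principal angles between the two ranges), hence the same value under every unitarily invariant norm. Then $\Vert(I-P_B)P_A\Vert=\Vert(I-P_B)EA^{\dagger}\Vert\le\Vert E\Vert\,\Vert A^{\dagger}\Vert_2$ puts the second term in the claimed form, the third term follows symmetrically with the row-space projectors $R_A,R_B$, and summing the three contributions gives \eqref{lemma5one} with $\mu=3$.
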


\begin{remark}
A unitarily invariant norm $\Vert \cdot \Vert$ satisfies $\Vert A \Vert = \Vert UAV \Vert$ for any unitary matrices $U$ and $V$. Specifically, the $2$-norm, the Frobenius norm, and many other norms related to the singular value are unitarily invariant norms.

\end{remark}

The following lemmas are just from \cite{zeng2021newton}.

\begin{lemma}\label{lemma2:zeng} For $\mathbb{F}=\mathbb{C}$ or $\mathbb{R}$, let $\mathbf{z} \mapsto \phi(\mathbf{z})$ be a continuous injective mapping from an open set $\Omega$ in $\mathbb{F}^n$ to $\mathbb{F}^m$. At any $\mathbf{z}_0 \in \Omega$, there is an open neighborhood $\Delta$ of $\phi\left(\mathbf{z}_0\right)$ in $\mathbb{F}^m$ such that, for every $\mathbf{b} \in \Delta$, there exists a $\mathbf{z}_{\mathbf{b}}$ in $\Omega$ and an open neighborhood $\Sigma_0$ of $\mathbf{z}_{\mathbf{b}}$ with
$$
\left\|\mathbf{b}-\phi\left(\mathbf{z}_{\mathbf{b}}\right)\right\|_2=\min _{\mathbf{z} \in \Sigma_0}\|\mathbf{b}-\phi(\mathbf{z})\|_2 .
$$
Further assume $\phi$ is differentiable in $\Omega$. Then
$$
\phi_{\mathbf{z}}\left(\mathbf{z}_{\mathbf{b}}\right)^{\dagger}\left(\mathbf{b}-\phi\left(\mathbf{z}_{\mathbf{b}}\right)\right)=\mathbf{0}.
$$
\end{lemma}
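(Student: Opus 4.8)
The plan is to split the statement into two parts: first establishing the existence of an \emph{interior} local minimizer $\mathbf{z}_{\mathbf{b}}$ of the distance function for every $\mathbf{b}$ in a suitable neighborhood $\Delta$ of $\phi(\mathbf{z}_0)$, and then deriving the claimed orthogonality identity from the first-order optimality condition together with a pseudo-inverse identity.

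First I would fix a radius $\rho>0$ so that the closed ball $\overline{B}(\mathbf{z}_0,\rho)\subset\Omega$, and study the continuous function $\mathbf{z}\mapsto\|\mathbf{b}-\phi(\mathbf{z})\|_2$. Since $\phi$ is continuous and injective, the map $\mathbf{z}\mapsto\|\phi(\mathbf{z})-\phi(\mathbf{z}_0)\|_2$ attains a \emph{strictly positive} minimum, say $2\eta>0$, on the compact boundary sphere $\partial B(\mathbf{z}_0,\rho)$ (strict positivity is exactly where injectivity enters). I would then take $\Delta=B(\phi(\mathbf{z}_0),\eta)$. For any $\mathbf{b}\in\Delta$ the value at the center is $\|\mathbf{b}-\phi(\mathbf{z}_0)\|_2<\eta$, while for $\mathbf{z}$ on the sphere the triangle inequality gives $\|\mathbf{b}-\phi(\mathbf{z})\|_2\ge 2\eta-\eta=\eta$. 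Because the continuous distance attains its minimum over the compact ball $\overline{B}(\mathbf{z}_0,\rho)$ at some $\mathbf{z}_{\mathbf{b}}$, and that minimum is strictly below every boundary value, $\mathbf{z}_{\mathbf{b}}$ must lie in the open interior; taking $\Sigma_0=B(\mathbf{z}_0,\rho)$ then yields the stated local minimization property, with $\mathbf{z}_{\mathbf{b}}$ automatically bounded away from $\partial\Omega$.

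For the second part, assuming $\phi$ differentiable, I would work with the smooth objective $h(\mathbf{z})=\tfrac12\|\mathbf{b}-\phi(\mathbf{z})\|_2^2$, whose gradient is $\nabla_{\mathbf{z}}h(\mathbf{z})=-\phi_{\mathbf{z}}(\mathbf{z})^{T}(\mathbf{b}-\phi(\mathbf{z}))$. Since $\mathbf{z}_{\mathbf{b}}$ is an interior local minimizer, Fermat's condition yields the normal equation $\phi_{\mathbf{z}}(\mathbf{z}_{\mathbf{b}})^{T}(\mathbf{b}-\phi(\mathbf{z}_{\mathbf{b}}))=\mathbf{0}$. To convert this into the pseudo-inverse form, I would invoke the standard Moore--Penrose identity $A^{\dagger}=(A^{T}A)^{\dagger}A^{T}$ (readily checked from the singular value decomposition) with $A=\phi_{\mathbf{z}}(\mathbf{z}_{\mathbf{b}})$, giving $\phi_{\mathbf{z}}(\mathbf{z}_{\mathbf{b}})^{\dagger}(\mathbf{b}-\phi(\mathbf{z}_{\mathbf{b}}))=(A^{T}A)^{\dagger}\big(A^{T}(\mathbf{b}-\phi(\mathbf{z}_{\mathbf{b}}))\big)=\mathbf{0}$, as claimed.

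The main obstacle is the existence step: one must prevent the minimizer from escaping to the boundary of the ball, which is precisely where injectivity is indispensable, since otherwise $\phi$ might revisit the value $\phi(\mathbf{z}_0)$ on the sphere and force $\eta=0$. By contrast, the optimality computation and the pseudo-inverse identity are routine once an interior differentiable minimizer is in hand. For the complex case $\mathbb{F}=\mathbb{C}$, I would identify $\mathbb{C}^{k}\cong\mathbb{R}^{2k}$ so that the same real-variable minimization argument applies verbatim, with the conjugate transpose replacing the transpose in the pseudo-inverse identity.
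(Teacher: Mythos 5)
Your proof is correct, but note that there is nothing in the paper to compare it against: this lemma sits in the Appendix immediately after the sentence ``The following lemmas are just from \cite{zeng2021newton}'', i.e., it is quoted without proof, and your argument in effect reconstructs the standard one from that reference. The existence step is exactly right: injectivity plus compactness of the sphere $\partial B(\mathbf{z}_0,\rho)$ gives the strictly positive bound $2\eta$, the center-versus-boundary comparison traps the minimizer of $\|\mathbf{b}-\phi(\cdot)\|_2$ over $\overline{B}(\mathbf{z}_0,\rho)$ in the open ball for every $\mathbf{b}\in B(\phi(\mathbf{z}_0),\eta)$, and $\Sigma_0=B(\mathbf{z}_0,\rho)$ serves as the required neighborhood. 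The optimality step, Fermat's condition $\phi_{\mathbf{z}}(\mathbf{z}_{\mathbf{b}})^{T}\left(\mathbf{b}-\phi(\mathbf{z}_{\mathbf{b}})\right)=\mathbf{0}$ followed by the Moore--Penrose identity $A^{\dagger}=(A^{*}A)^{\dagger}A^{*}$, is also correct and is the intended mechanism. The only spot that deserves tightening is the case $\mathbb{F}=\mathbb{C}$: the lemma's conclusion involves the \emph{complex} Jacobian $\phi_{\mathbf{z}}$ and its pseudo-inverse, whereas Fermat's condition under the identification $\mathbb{C}^{n}\cong\mathbb{R}^{2n}$ is a statement about the real Jacobian of the realified map. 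Converting the real first-order condition into $\phi_{\mathbf{z}}(\mathbf{z}_{\mathbf{b}})^{*}\left(\mathbf{b}-\phi(\mathbf{z}_{\mathbf{b}})\right)=\mathbf{0}$ requires the Cauchy--Riemann block structure of the real Jacobian (equivalently, Wirtinger calculus and holomorphy of $\phi$), so ``applies verbatim'' is a short computation away from a proof there; your remark that the conjugate transpose replaces the transpose is the right fix, but it should be stated as a consequence of holomorphy rather than as an automatic identification. This caveat does not affect the correctness of the proposal.
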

\begin{lemma}  (Stationary Point Property) Let $\theta \mapsto \nabla L(\theta)$ be a smooth mapping with a semiregular zero $\theta^*$ and $r={rank}\left(\mathbf{H}\left(\theta^*\right)\right)$. Then there is an open neighborhood $\Omega_*$ of $\theta^*$ such that, for any $\hat{\theta} \in \Omega_*$, the equality $\left(\mathbf{H}\right)(\hat{\theta})_{\text {rank-r }}^{\dagger} \nabla L (\hat{\theta})=\mathbf{0}$ holds if and only if $\hat{\theta}$ is a semiregular zero of $\nabla L$ in the same branch of $\theta^*$.
\end{lemma}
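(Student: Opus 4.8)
The plan is to follow the same contradiction argument used for the $\mathbf{J}$-version lemma in Section~\ref{sec:semiregular}, but now with the exact Hessian $\mathbf{H}$ in place of its first-order approximation $\mathbf{J}$. This replacement actually \emph{simplifies} the argument: because $\mathbf{H}(\theta^*)$ is symmetric with rank exactly $r$, its rank-$r$ projected pseudoinverse coincides with the ordinary Moore--Penrose inverse, so $\mathbf{H}(\theta^*)^{\dagger}_{\text{rank-}r}\mathbf{H}(\theta^*)$ is precisely the orthogonal projector onto $\operatorname{Range}(\mathbf{H}(\theta^*))$, and no auxiliary bound of the form $\|\mathbf{Q}\|\le\epsilon$ is needed. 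The ``if'' direction is immediate: a semiregular zero $\hat\theta$ satisfies $\nabla L(\hat\theta)=\mathbf{0}$, whence $\mathbf{H}(\hat\theta)^{\dagger}_{\text{rank-}r}\nabla L(\hat\theta)=\mathbf{0}$. The content lies in the ``only if'' direction, which I would split into two claims: (i) there is a neighborhood $\Omega_1$ of $\theta^*$ on which the stationarity $\mathbf{H}(\hat\theta)^{\dagger}_{\text{rank-}r}\nabla L(\hat\theta)=\mathbf{0}$ forces $\nabla L(\hat\theta)=\mathbf{0}$; and (ii) there is a neighborhood $\Omega_2$ on which every zero of $\nabla L$ is a semiregular zero lying in the same branch as $\theta^*$. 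The conclusion then holds on $\Omega_*=\Omega_1\cap\Omega_2$.

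For claim (i) I would argue by contradiction, supposing a sequence $\theta_j\to\theta^*$ with $\mathbf{H}(\theta_j)^{\dagger}_{\text{rank-}r}\nabla L(\theta_j)=\mathbf{0}$ but $\nabla L(\theta_j)\ne\mathbf{0}$. Let $\mathbf{z}\mapsto\phi(\mathbf{z})$ parameterize the branch through $\theta^*$ as in Definition~\ref{Di-zero}. Applying Lemma~\ref{lemma2:zeng} gives, for large $j$, nearest branch points $\check\theta_j=\phi(\mathbf{z}_j)$ with $\phi_{\mathbf{z}}(\mathbf{z}_j)^{\dagger}(\theta_j-\phi(\mathbf{z}_j))=\mathbf{0}$, and a triangle-inequality estimate against $\|\theta_j-\theta^*\|\to 0$ shows $\check\theta_j\to\theta^*$. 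Setting $\mathbf{v}_j=(\theta_j-\check\theta_j)/\|\theta_j-\check\theta_j\|_2$ and passing to a convergent subsequence $\mathbf{v}_j\to\mathbf{v}$, a mean-value expansion of $\nabla L$ between $\theta_j$ and $\check\theta_j$ together with the stationarity condition yields
\begin{align*}
\mathbf{0}=\lim_{j\to\infty}\frac{\mathbf{H}(\theta_j)^{\dagger}_{\text{rank-}r}\bigl(\nabla L(\check\theta_j)-\nabla L(\theta_j)\bigr)}{\|\theta_j-\check\theta_j\|_2}=\mathbf{H}(\theta^*)^{\dagger}_{\text{rank-}r}\,\mathbf{H}(\theta^*)\,\mathbf{v}.
\end{align*}
Here I would invoke Weyl's theorem: since $\sigma_r(\mathbf{H}(\theta^*))>0=\sigma_{r+1}(\mathbf{H}(\theta^*))$ there is a spectral gap, so the truncated rank-$r$ SVD, hence $\mathbf{H}(\cdot)^{\dagger}_{\text{rank-}r}$, is continuous at $\theta^*$, justifying the passage to the limit.

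It then remains to convert this identity into a nullity contradiction. Because $\mathbf{H}(\theta^*)^{\dagger}_{\text{rank-}r}\mathbf{H}(\theta^*)$ is the orthogonal projector onto $\operatorname{Range}(\mathbf{H}(\theta^*))$ and $\mathbf{H}(\theta^*)$ is symmetric, the displayed identity gives $\mathbf{v}\in\operatorname{Range}(\mathbf{H}(\theta^*))^{\perp}=\operatorname{Kernel}(\mathbf{H}(\theta^*))$. Differentiating the branch identity $\nabla L(\phi(\mathbf{z}))\equiv\mathbf{0}$ at $\mathbf{z}_*$ gives $\mathbf{H}(\theta^*)\phi_{\mathbf{z}}(\mathbf{z}_*)=O$, so $\operatorname{Range}(\phi_{\mathbf{z}}(\mathbf{z}_*))\subseteq\operatorname{Kernel}(\mathbf{H}(\theta^*))$, while the limit of $\phi_{\mathbf{z}}(\mathbf{z}_j)^{\dagger}(\theta_j-\phi(\mathbf{z}_j))=\mathbf{0}$ forces $\mathbf{v}\perp\operatorname{Range}(\phi_{\mathbf{z}}(\mathbf{z}_*))$. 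Hence $\operatorname{span}\{\mathbf{v}\}\oplus\operatorname{Range}(\phi_{\mathbf{z}}(\mathbf{z}_*))\subseteq\operatorname{Kernel}(\mathbf{H}(\theta^*))$, giving $\operatorname{nullity}(\mathbf{H}(\theta^*))\ge\operatorname{rank}(\phi_{\mathbf{z}}(\mathbf{z}_*))+1=\dim_{\nabla L}(\theta^*)+1$, which contradicts semiregularity. For an isolated semiregular zero ($\dim_{\nabla L}(\theta^*)=0$) the same computation applies with $\check\theta_j=\theta^*$, contradicting $\operatorname{nullity}(\mathbf{H}(\theta^*))=0$.

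Finally, claim (ii) is a persistence statement. By Definition~\ref{Di-zero}, in a small enough neighborhood every zero of $\nabla L$ already lies on the single branch $\phi(\Lambda)$ through $\theta^*$, so it suffices to show such a zero is semiregular; for this I would use that $\operatorname{rank}(\mathbf{H})$ is lower semicontinuous (Weyl) while along $\phi(\Lambda)$ the dimension $\dim_{\nabla L}$ is locally constant, so shrinking $\Omega_2$ keeps both $\operatorname{rank}(\mathbf{H}(\theta))$ and $\dim_{\nabla L}(\theta)$ equal to their values at $\theta^*$, preserving $\dim_{\nabla L}+\operatorname{rank}(\mathbf{H})=m$. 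I expect the main obstacle to be the limit interchange in claim (i): one must guarantee that the rank of $\mathbf{H}$ does not drop below $r$ along $\theta_j$, so that the truncated pseudoinverse converges to $\mathbf{H}(\theta^*)^{\dagger}$ rather than to the inverse of a lower-rank truncation. The spectral-gap/Weyl argument is precisely what secures this, and it is the step that most repays careful treatment.
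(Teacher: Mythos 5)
Your argument is correct, but it is worth being precise about what the paper itself does with this statement: it does not prove it. The lemma sits in the Appendix under the remark ``The following lemmas are just from \cite{zeng2021newton}'', i.e.\ it is imported from Zeng's work as a black box, rank-$r$ projected pseudoinverse and all. What the paper \emph{does} prove is the analogous statement for the approximate Hessian $\mathbf{J}$ in Section \ref{sec:semiregular}, and your proposal is essentially that proof (which in turn follows Zeng's Lemma 4): the same contradiction sequence $\theta_j\to\theta^*$, the same nearest-point construction on the branch via Lemma \ref{lemma2:zeng}, the same limit identity for the unit vectors $\mathbf{v}_j$, the same nullity-versus-semiregularity contradiction, and the same final intersection of two neighborhoods. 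Two differences are worth recording. First, because the paper's Section \ref{sec:semiregular} version replaces $\mathbf{H}$ by $\mathbf{J}$, it must assume $\operatorname{Range}(\mathbf{Q}(\theta^*))\subseteq\operatorname{Range}(\mathbf{J}(\theta^*))$ and invoke $\Vert\mathbf{Q}\Vert\le\epsilon$ to pass from $\mathbf{J}(\theta^*)^{\dagger}\mathbf{H}(\theta^*)\mathbf{v}=\mathbf{0}$ to $\mathbf{v}\in\operatorname{Kernel}(\mathbf{H}(\theta^*))$; your observation that the genuine Hessian version needs none of this, since symmetry of $\mathbf{H}(\theta^*)$ and exact rank $r$ make $\mathbf{H}(\theta^*)^{\dagger}_{\text{rank-}r}\mathbf{H}(\theta^*)$ an orthogonal projector onto $\operatorname{Range}(\mathbf{H}(\theta^*))$, is exactly right and is why the statement as quoted carries no such hypothesis. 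Your explicit treatment of the spectral gap $\sigma_r>0=\sigma_{r+1}$ to justify continuity of the truncated pseudoinverse is also a step the paper glosses over. Second, for the persistence claim (ii) the paper simply cites the Local Invariance lemma (Lemma \ref{lemma3:zeng}, again from \cite{zeng2021newton}), whereas you prove it directly from Definition \ref{Di-zero} plus lower semicontinuity of rank; that argument is sound, but you should state the missing half explicitly: for a zero $\theta$ on the branch, $\operatorname{Range}(\phi_{\mathbf{z}}(\mathbf{z}))\subseteq\operatorname{Kernel}(\mathbf{H}(\theta))$ gives $\operatorname{rank}(\mathbf{H}(\theta))\le m-k$, and only together with $\operatorname{rank}(\mathbf{H}(\theta))\ge r=m-k$ from semicontinuity does the rank get pinned to exactly $r$, yielding semiregularity.
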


\begin{lemma} \label{lemma3:zeng} (Local Invariance of Semiregularity) Let $\theta^*$ be a semiregular zero of a smooth mapping $\nabla L$. Then there is an open neighborhood $\Delta_*$ of $\theta^*$ such that every $\hat{\theta} \in \Delta_* \cap (\nabla L)^{-1}(\mathbf{0})$ is a semiregular zero of $\nabla L$ in the same branch of $\theta^*$.
\end{lemma}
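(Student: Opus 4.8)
The plan is to exploit the local parameterization that semiregularity of $\theta^*$ provides, and to pin down the rank of the Jacobian $\mathbf{H}=D(\nabla L)$ by a two-sided semicontinuity argument. By the definition of $\dim_{\nabla L}(\theta^*)$ (Definition \ref{Di-zero}), there is an open neighborhood $\Omega_z$ of $\theta^*$ and a differentiable injective map $\mathbf{z}\mapsto\phi(\mathbf{z})$ on a connected open set $\Lambda\subset\mathbb{R}^k$ with $\Omega_z\cap(\nabla L)^{-1}(\mathbf{0})=\phi(\Lambda)$, $\phi(\mathbf{z}_*)=\theta^*$, and $\operatorname{rank}(\phi_{\mathbf{z}}(\mathbf{z}_*))=k=\dim_{\nabla L}(\theta^*)$. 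I would simply take $\Delta_*\subseteq\Omega_z$. Then any $\hat{\theta}\in\Delta_*\cap(\nabla L)^{-1}(\mathbf{0})$ automatically lies in $\phi(\Lambda)$, i.e. $\hat{\theta}=\phi(\hat{\mathbf{z}})$ for some $\hat{\mathbf{z}}\in\Lambda$; this is exactly the assertion that $\hat{\theta}$ lies in the same branch as $\theta^*$, so the ``same branch'' half of the conclusion is immediate from the definition of the dimension of a zero.

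Next I would show that $\dim_{\nabla L}(\hat{\theta})$ is well defined and equals $k$. Since $\phi_{\mathbf{z}}(\mathbf{z})$ depends continuously on $\mathbf{z}$ and rank is lower semicontinuous, the maximal value $\operatorname{rank}(\phi_{\mathbf{z}}(\mathbf{z}_*))=k$ persists on a neighborhood of $\mathbf{z}_*$; shrinking $\Lambda$ (hence $\Delta_*$) if necessary, $\operatorname{rank}(\phi_{\mathbf{z}}(\hat{\mathbf{z}}))=k$ whenever $\phi(\hat{\mathbf{z}})\in\Delta_*$. Because $\Omega_z\cap(\nabla L)^{-1}(\mathbf{0})=\phi(\Lambda)$, the restriction of $\phi$ to a small neighborhood of $\hat{\mathbf{z}}$ parameterizes the zero set near $\hat{\theta}$ and serves as the map required at $\hat{\theta}$ in Definition \ref{Di-zero}, giving $\dim_{\nabla L}(\hat{\theta})=k$.

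The heart of the argument is to prove $\operatorname{rank}(\mathbf{H}(\hat{\theta}))=m-k$, which combined with the previous step yields semiregularity. Differentiating the identity $\nabla L(\phi(\mathbf{z}))\equiv\mathbf{0}$ on $\Lambda$ gives $\mathbf{H}(\phi(\mathbf{z}))\,\phi_{\mathbf{z}}(\mathbf{z})=O$, so $\operatorname{Range}(\phi_{\mathbf{z}}(\mathbf{z}))\subseteq\operatorname{Kernel}(\mathbf{H}(\phi(\mathbf{z})))$; evaluated at $\hat{\mathbf{z}}$ this forces $\operatorname{nullity}(\mathbf{H}(\hat{\theta}))\ge k$, i.e. $\operatorname{rank}(\mathbf{H}(\hat{\theta}))\le m-k$. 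For the reverse inequality I would invoke lower semicontinuity of rank once more: the entries of $\mathbf{H}(\theta)=D(\nabla L)(\theta)$ vary continuously by smoothness of $\nabla L$, and $\operatorname{rank}(\mathbf{H}(\theta^*))=m-k$ by semiregularity of $\theta^*$, so $\operatorname{rank}(\mathbf{H}(\hat{\theta}))\ge m-k$ after possibly shrinking $\Delta_*$. Combining the two bounds gives $\operatorname{rank}(\mathbf{H}(\hat{\theta}))=m-k$, whence $\dim_{\nabla L}(\hat{\theta})+\operatorname{rank}(\mathbf{H}(\hat{\theta}))=k+(m-k)=m$ and $\hat{\theta}$ is semiregular in the branch of $\theta^*$.

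I expect the main obstacle to be the careful bookkeeping of the two rank bounds, precisely because rank is only lower semicontinuous: the bound $\operatorname{rank}(\mathbf{H})\ge m-k$ follows for free from continuity, but the matching upper bound $\operatorname{rank}(\mathbf{H})\le m-k$ cannot come from semicontinuity (which points the wrong way) and must instead be extracted structurally from $\mathbf{H}\,\phi_{\mathbf{z}}=O$. A secondary technical point is ensuring that the several neighborhoods — $\Lambda$ about $\mathbf{z}_*$, $\Omega_z$ about $\theta^*$, and the sets on which the two rank conditions hold — can be shrunk simultaneously to a single $\Delta_*$ on which every conclusion is valid; here the injectivity and differentiability of $\phi$ together with $\Omega_z\cap(\nabla L)^{-1}(\mathbf{0})=\phi(\Lambda)$ are exactly what rule out extraneous branches of the zero set approaching $\hat{\theta}$.
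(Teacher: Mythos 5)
The paper itself offers no proof of this lemma: it is stated in the Appendix with the remark that these lemmas ``are just from'' \cite{zeng2021newton}, so your proposal can only be measured against the argument of that source. Your skeleton does reproduce its core, namely the two-sided pinning of the rank: the inequality $\operatorname{rank}(\mathbf{H}(\hat{\theta}))\ge\operatorname{rank}(\mathbf{H}(\theta^*))=m-k$ from lower semicontinuity of rank, and the reverse bound $\operatorname{nullity}(\mathbf{H}(\hat{\theta}))\ge\operatorname{rank}(\phi_{\mathbf{z}}(\hat{\mathbf{z}}))$ from differentiating $\nabla L(\phi(\mathbf{z}))\equiv\mathbf{0}$, are both correct, and you rightly identify that the upper bound on the rank must come structurally from $\mathbf{H}\,\phi_{\mathbf{z}}=O$ rather than from semicontinuity.

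The genuine gap is the neighborhood-shrinking step: you claim that ``shrinking $\Lambda$ (hence $\Delta_*$) if necessary, $\operatorname{rank}(\phi_{\mathbf{z}}(\hat{\mathbf{z}}))=k$ whenever $\phi(\hat{\mathbf{z}})\in\Delta_*$.'' This tacitly assumes that $\phi(\hat{\mathbf{z}})$ close to $\theta^*$ forces $\hat{\mathbf{z}}$ close to $\mathbf{z}_*$, i.e.\ continuity of $\phi^{-1}$ along the zero set --- and nothing in Definition \ref{Di-zero} guarantees this, since a continuous (even differentiable) injection on a noncompact open set need not be a homeomorphism onto its image. Concretely, with $k=1$ take an injective differentiable curve that passes through $\theta^*$ at $\mathbf{z}_*$ and whose far-parameter end spirals back toward $\theta^*$, accumulating there without meeting it: every hypothesis of Definition \ref{Di-zero} is satisfied, yet every neighborhood $\Delta_*$ of $\theta^*$ contains zeros $\phi(\hat{\mathbf{z}})$ with $\hat{\mathbf{z}}$ arbitrarily far from $\mathbf{z}_*$, where neither your rank claim nor your step asserting that $\phi$ restricted near $\hat{\mathbf{z}}$ parameterizes \emph{all} zeros near $\hat{\theta}$ (needed for $\dim_{\nabla L}(\hat{\theta})$ to be well-defined) is available. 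Your closing remark that injectivity together with $\Omega_z\cap(\nabla L)^{-1}(\mathbf{0})=\phi(\Lambda)$ ``rules out extraneous branches'' is precisely what fails here: the offending strands are not extraneous branches but far-parameter pieces of the same $\phi(\Lambda)$. To repair the argument you should first use $\operatorname{rank}(\phi_{\mathbf{z}}(\mathbf{z}_*))=k$ to pass to a connected open $U\ni\mathbf{z}_*$ with compact closure $\overline{U}\subset\Lambda$ on which $\operatorname{rank}\phi_{\mathbf{z}}\equiv k$, so that $\phi|_{\overline{U}}$ is a homeomorphism onto its image (a continuous injection on a compact set), and then \emph{prove} the containment $\Delta_*\cap(\nabla L)^{-1}(\mathbf{0})\subseteq\phi(U)$ for some open $\Delta_*\ni\theta^*$ --- for instance under a properness assumption on $\phi$, or under the reading of Definition \ref{Di-zero} implicitly used in \cite{zeng2021newton} in which $\phi$ is a homeomorphism onto $\Omega_z\cap(\nabla L)^{-1}(\mathbf{0})$. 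That containment, which your proposal assumes rather than establishes, is where the actual content of the lemma resides; once it is in hand, your rank bookkeeping finishes the proof exactly as you describe.
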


\end{document}